\newtheorem{thm}{Theorem}[section]
\newtheorem{proposition}[thm]{Proposition}
\newtheorem{remark}[thm]{Remark}
\title{\bf ON THE DYNAMICS OF A HAMILTON-POISSON SYSTEM\footnote{PREPRINT - Sci. Bull. Politehnica University of Timisoara, Transactions on Mathematics \& Physics, \textbf{63}(77), Issue 2, (2018), 14-–28.}}
\author{Cristian L\u azureanu and Camelia Petri\c sor\\
Politehnica University of Timi\c{s}oara}
\date{}
\begin{document}

\thispagestyle{plain}

\maketitle

\begin{abstract}
The dynamics of a three-dimensional Hamilton-Poisson system is closely related to its constants of motion, the energy or Hamiltonian function $H$ and a Casimir $C$ of the corresponding Lie algebra. The orbits of the system are included in the intersection of the level sets $H=constant$ and $C=constant$. Furthermore, for some three-dimensional Hamilton-Poisson systems, connections between the associated energy-Casimir mapping $(H,C)$ and some of their dynamic properties were reported. In order to detect new connections, we construct a Hamilton-Poisson system using two smooth functions as its constants of motion. The new system has infinitely many Hamilton-Poisson realizations. We study the stability of the equilibrium points and the existence of periodic orbits. Using numerical integration we point out four pairs of heteroclinic orbits. \footnote{MSC(2010): 70H12, 70H14, 70K20, 70K42, 70K44, 65D30.

Keywords and phrases: {\it Hamilton-Poisson dynamics, energy-Casimir mapping, stability, periodic orbits, heteroclinic orbits, mid-point rule.}}
\end{abstract}
\bigskip

\section{Introduction}
\smallskip
Many systems of first order differential equations that model processes in physics, chemistry, biology, economy, and other domains are three-dimensional systems. Some of them have two constants of motion. Consequently, they are Hamilton-Poisson systems (see, for example \cite{Tud12}). The dynamics of such systems takes place at the intersection of the common level sets of the Hamiltonian and the Casimir (see, for example, \cite{HolmMar91}). In \cite{TudAroNic09}, the energy-Casimir mapping $\mathcal{EC}=(H,C)$ is introduced. Moreover, some connections between the dynamics of the system considered in \cite{TudAroNic09} and the associated energy-Casimir mapping were given. In recent papers, the same connections and new ones were reported (see, for example, \cite{Lazureanu17c}, \cite{LazPet18} and therein references). These connections depend on the image of the energy-Casimir mapping. Also they depend on the partition of this image given by the images of the equilibrium points through $\mathcal{EC}$. In the most of cases the image of $\mathcal{EC}$ is a convex set. In this paper we add a new example in the list of the systems that are analyzed by this point of view. We mention here that in our case the image of the energy-Casimir mapping is a non-convex set.

The paper is organized as follows. In Section 2, we construct a three-dimensional system of differential equations using two smooth functions as its constants of motion. One of these constants of motion is a Casimir of the Lie algebra $\mathfrak{so}(3)$, but the other one is a non-quadratic polynomial. We recall that quadratic Hamilton-Poisson systems on the dual space of $\mathfrak{so}(3)$ were investigated in \cite{Adams14,Adams16}. In Section 3, using the above-mentioned Lie algebra, we give a Hamilton-Poisson realization of the considered system. Moreover, we obtain that our system has infinitely many Hamilton-Poisson realizations. In Section 4, we consider the energy-Casimir mapping $\mathcal{EC}$ associated to the considered system. Using the images of critical points of $\mathcal{EC}$ we give a semialgebraic partition of the image of this mapping. The connections of the energy-Casimir mapping with the dynamics of our system are pointed out in next sections. In Section 5, we prove results regarding the stability of the equilibrium points. In Section 6, we establish the topology of the fibers of the energy-Casimir mapping. We prove the existence of the periodic orbits. Using numerical simulations, we also claim the existence of heteroclinic orbits.

\section{A construction of an integrable three-dimensional\\ system}
In this section we construct a three-dimensional system of differential equations using two smooth functions as its constants of motion. 

Let $H$ and $C$ be two smooth functions given by
\begin{equation}\label{H-C}
H(x,y,z)=\dfrac{1}{4}x^4+\dfrac{1}{4}y^4-\dfrac{1}{2}z^2~,~~C(x,y,z)=\dfrac{1}{2}x^2+\dfrac{1}{2}y^2+\dfrac{1}{2}z^2.
\end{equation}
Consider $x,y,z\in C^1(\mathbb{R})$ such that
\begin{align*}
&H(x(t),y(t),z(t))=H(x(0),y(0),z(0))\\
&C(x(t),y(t),z(t))=C(x(0),y(0),z(0))~,~\forall t\in\mathbb{R}.
\end{align*} 
Then $$\frac{dH}{dt}=0~,~~\frac{dC}{dt}=0,$$
that is 
\begin{align*}
&\frac{\partial H}{\partial x}\dot x+\frac{\partial H}{\partial y}\dot y=-\frac{\partial H}{\partial z}\dot z\\
&\frac{\partial C}{\partial x}\dot x+\frac{\partial C}{\partial y}\dot y=-\frac{\partial C}{\partial z}\dot z.
\end{align*}
In our case we have
\begin{align*}
&x^3\dot x+y^3\dot y=z\dot z\\
&x\dot x+y\dot y=-z\dot z.
\end{align*}
Setting $$\dot z=x^3y-xy^3,$$
we get the following system
\begin{equation}\label{1}
\left\{\begin{array}{l}
\dot x=yz(1+y^2)\\
\dot y=-xz(1+x^2)\\
\dot z=xy(x^2-y^2)
\end{array}\right.
\end{equation}
It is obvious that $H$ and $C$ are constants of motion of system \eqref{1}. Moreover, this system is integrable and, in fact, it is a Hamilton-Poisson system. 
\section{Hamilton-Poisson realizations}
In this section we give Hamilton-Poisson realizations of system \eqref{1}. We obtain that the considered system is bi-Hamiltonian and in addition it has infinitely many Hamilton-Poisson realizations.
\begin{proposition}\label{p1}
The system (\ref{1}) has the Hamilton-Poisson realization $$(\mathfrak{so}(3)^*,\Pi_1,H),$$ where $\mathfrak{so}(3)^*$ is the dual space of the Lie algebra $\mathfrak{so}(3)$, the Hamiltonian function $H$ is given by \eqref{H-C} and the Poisson structure is given by
\begin{equation}\label{pi1}
\Pi_1=\left[\begin{array}{rrr}
0&z&-y\\
-z&0&x\\
y&-x&0\end{array}\right].
\end{equation}
\end{proposition}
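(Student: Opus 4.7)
The plan is to verify directly the three defining ingredients of a Hamilton-Poisson realization on the triple $(\mathfrak{so}(3)^*, \Pi_1, H)$: (i) that $\Pi_1$ is a genuine Poisson tensor, (ii) that the Hamiltonian vector field $X_H = \Pi_1\,\nabla H$ recovers the right-hand side of system \eqref{1}, and, as a sanity check, (iii) that the function $C$ in \eqref{H-C} is a Casimir of $\Pi_1$.

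For (i) I would appeal to structure theory rather than computation. The matrix $\Pi_1$ in \eqref{pi1} is exactly the linear (Lie--Poisson) Poisson tensor on $\mathfrak{so}(3)^*$ whose components are $\pi_{ij}(\mathbf{x}) = \varepsilon_{ijk} x_k$, built from the structure constants of $\mathfrak{so}(3)$. As a Lie--Poisson structure on the dual of a Lie algebra it is automatically skew-symmetric and satisfies the Jacobi identity, so no direct bracket computation is needed.

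The content of the proposition lies in (ii), and this is a one-line calculation I would simply carry out. With $H$ from \eqref{H-C} one has $\nabla H=(x^3,\,y^3,\,-z)^T$, so
\begin{equation*}
\Pi_1\,\nabla H = \begin{pmatrix}0&z&-y\\-z&0&x\\y&-x&0\end{pmatrix}\begin{pmatrix}x^3\\y^3\\-z\end{pmatrix} = \begin{pmatrix}zy^3+yz\\-zx^3-xz\\yx^3-xy^3\end{pmatrix},
\end{equation*}
and factoring the three entries yields $yz(1+y^2)$, $-xz(1+x^2)$, $xy(x^2-y^2)$, matching system \eqref{1} exactly.

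Finally, for (iii), I would observe that $\nabla C = (x,y,z)^T$ and compute $\Pi_1\,\nabla C = 0$ directly from \eqref{pi1}, so $C$ is a Casimir of $\Pi_1$, consistent with the construction in Section 2 where $C$ was prescribed as a constant of motion. I do not foresee any obstacle here: the proposition is essentially a verification, and the only step with any computational substance is the matrix-vector product displayed above.
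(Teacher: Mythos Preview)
Your proposal is correct and follows essentially the same approach as the paper: the authors cite that $C$ is a Casimir of $\mathfrak{so}(3)$ (which encodes your point (i) and (iii)) and then assert $\Pi_1\cdot\nabla C=\mathbf{0}$ and $\Pi_1\cdot\nabla H=\dot{\mathbf{x}}^t$, which is exactly your (ii) and (iii). You are simply more explicit in writing out the matrix-vector product, but the argument is the same verification.
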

\begin{proof}
It is known that the function $C$ given by \eqref{H-C} is a Casimir of the Lie algebra $\mathfrak{so}(3)$ (see, e.g. \cite{Adams14}), where
$$
\mathfrak{so}(3)=\{X=\left[\begin{array}{ccc}
0 &-w&v \\
w &0&-u\\
-v &u &0
\end{array}\right]:u,v,w \in\mathbb{R}\}.$$
We immediately obtain $\Pi_1\cdot\nabla C={\bf 0}$ and $\Pi_1\cdot\nabla H=\dot{\bf x}^t$, ${\bf x}=(x,y,z)$.
\end{proof}
\begin{proposition}\label{p2}
The system (\ref{1}) is a bi-Hamiltonian system.
\end{proposition}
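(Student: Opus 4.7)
The plan is to exhibit a second, inequivalent Hamilton--Poisson realization of system (\ref{1}) by swapping the roles of $H$ and $C$. Concretely, I will seek a Poisson structure $\Pi_2$ on $\mathbb{R}^3$ having $H$ as a Casimir and reproducing the dynamics of (\ref{1}) with Hamiltonian $C$; together with the realization of Proposition \ref{p1}, this will show that (\ref{1}) is bi-Hamiltonian.

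To construct $\Pi_2$, I would exploit the standard parametrization of a skew bivector on $\mathbb{R}^3$: every such bivector can be written as $\Pi^{ij}=\varepsilon^{ijk}V_k$ for a vector field $\mathbf{V}$, and under this identification the Jacobi identity reduces to the single scalar equation $\mathbf{V}\cdot(\nabla\times\mathbf{V})=0$. In particular, if $\mathbf{V}$ is a gradient then the Jacobi identity holds automatically. The Casimir condition $\Pi_2\,\nabla H=\mathbf{0}$ translates to $\mathbf{V}\parallel\nabla H$, so the simplest admissible ansatz is $\mathbf{V}=-\nabla H=(-x^3,-y^3,z)$. This yields
\begin{equation*}
\Pi_2=\left[\begin{array}{rrr}
0 & z & y^3\\
-z & 0 & -x^3\\
-y^3 & x^3 & 0\end{array}\right],
\end{equation*}
which is a Poisson structure for free and has $H$ as a Casimir by construction.

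The proof would then conclude with a direct matrix multiplication verifying $\Pi_2\cdot\nabla C=\dot{\mathbf{x}}^t$: the three components evaluate to $yz(1+y^2)$, $-xz(1+x^2)$, and $xy(x^2-y^2)$, matching (\ref{1}) exactly. This gives a second realization $(\mathbb{R}^3,\Pi_2,C)$ distinct from $(\mathfrak{so}(3)^*,\Pi_1,H)$, since $\Pi_2$ is nonlinear whereas $\Pi_1$ is linear; hence the system is bi-Hamiltonian. The only subtlety worth flagging is not any calculation---all checks are immediate once the ansatz is fixed---but rather the choice of ansatz itself: taking $\mathbf{V}=-\nabla H$ simultaneously enforces the Casimir condition and the Jacobi identity, reducing the problem to matching the dynamics, which is a one-line verification.
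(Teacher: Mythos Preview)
Your construction is exactly the paper's: the same $\Pi_2$, the same verification that $\Pi_2\nabla C=\dot{\mathbf x}^t$ and $\Pi_2\nabla H=\mathbf 0$. Your derivation via $\Pi^{ij}=\varepsilon^{ijk}V_k$ with $\mathbf V=-\nabla H$ is a nice explanation of where $\Pi_2$ comes from, which the paper leaves implicit.

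There is, however, one point you omit that the paper states explicitly: compatibility of $\Pi_1$ and $\Pi_2$. In the standard (Magri) sense, ``bi-Hamiltonian'' means not merely that two distinct Poisson structures generate the same vector field, but that every linear combination $a\Pi_1+b\Pi_2$ is again Poisson. Your closing argument---that $\Pi_2$ is nonlinear while $\Pi_1$ is linear, hence the realizations are ``distinct''---addresses inequivalence, not compatibility, and so does not close the argument under the usual definition (and it is precisely compatibility that the paper exploits in the next proposition to build the pencil $\Pi_{a,b}$). Fortunately your own framework dispatches this instantly: since $\Pi_1$ corresponds to $\mathbf V_1=\nabla C$ and $\Pi_2$ to $\mathbf V_2=-\nabla H$, any combination corresponds to $a\nabla C-b\nabla H=\nabla(aC-bH)$, which is curl-free, so $\mathbf V\cdot(\nabla\times\mathbf V)=0$ and Jacobi holds for the whole pencil. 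Add that sentence and the proof is complete.
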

\begin{proof}
Considering the second Poisson structure
\begin{equation}\label{pi2}
\Pi_2=\left[\begin{array}{ccc}
0&z&y^3\\
-z&0&-x^3\\
-y^3&x^3&0\end{array}\right],
\end{equation}
it follows that system \eqref{1} has the Hamilton-Poisson realization $(\mathbb{R}^3,\Pi_2,C)$, where $C$ is given by \eqref{H-C}. Furthermore the function $H$ \eqref{H-C} fulfills $\Pi_2\cdot\nabla H={\bf 0}$.

Because $\Pi_1\cdot\nabla H=\Pi_2\cdot\nabla C=\dot{\bf x}^t$ and $\Pi_1$ and $\Pi_2$ are compatible Poisson structures, the conclusion follows.
\end{proof}
Using the above results, we obtain the Poisson structure $\Pi_{a,b}=a\Pi_1-b\Pi_2$, $a,b\in\mathbb{R}$. Consider $c,d\in\mathbb{R}$ such that $ad-bc=1$ and $H_{c,d}=cC+dH$, $C_{a,b}=aC+bH$. We have $\Pi_{a,b}\cdot\nabla H_{c,d}=\dot{\bf x}^t$ and $\Pi_{a,b}\cdot\nabla C_{a,b}={\bf 0}$. Therefore we have proven the next result.
\begin{proposition}\label{p3}
There exist infinitely many Hamilton-Poisson realizations of system \eqref{1}, namely $(\mathbb{R}^3,\Pi_{a,b},H_{c,d})$, where
\begin{equation*}\label{pi}
\Pi_{a,b}=\left[\begin{array}{ccc}
0&(a-b)z&-ay+by^3\\
(b-a)z&0&ax-bx^3\\
ay-by^3&-ax+bx^3&0\end{array}\right],
\end{equation*}
and
$$H_{c,d}(x,y,z)=\frac{d}{4}(x^4+y^4)+\frac{c}{2}(x^2+y^2)+\frac{c-d}{2}z^2,$$
for every $a,b,c,d\in\mathbb{R}$ such that $ad-bc=1$.
\end{proposition}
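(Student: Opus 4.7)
The plan is to verify three things for each admissible quadruple $(a,b,c,d)$: that $\Pi_{a,b}$ is a genuine Poisson bivector on $\mathbb{R}^3$, that the Hamiltonian vector field produced by $H_{c,d}$ via $\Pi_{a,b}$ reproduces the right-hand side of \eqref{1}, and that $C_{a,b}$ is a Casimir of $\Pi_{a,b}$. The constraint $ad-bc=1$ then cuts out a three-dimensional family of admissible quadruples, furnishing infinitely many distinct realizations.

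First I would check that $\Pi_{a,b}=a\Pi_1-b\Pi_2$ is Poisson. Skew-symmetry is inherited by linearity. The Jacobi identity rests on the pencil property: denoting the Schouten bracket by $[\,\cdot\,,\,\cdot\,]$, one has $[\Pi_{a,b},\Pi_{a,b}]=a^2[\Pi_1,\Pi_1]-2ab[\Pi_1,\Pi_2]+b^2[\Pi_2,\Pi_2]$, where the outer brackets vanish because $\Pi_1$ and $\Pi_2$ are each Poisson and the cross term vanishes by the compatibility invoked in the proof of Proposition \ref{p2}. Alternatively, in this three-dimensional setting the Jacobi identity can be verified directly from the coordinate entries of $\Pi_{a,b}$.

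Next I would expand $\Pi_{a,b}\cdot\nabla H_{c,d}=(a\Pi_1-b\Pi_2)(c\nabla C+d\nabla H)$ into the four terms $ac\,\Pi_1\nabla C+ad\,\Pi_1\nabla H-bc\,\Pi_2\nabla C-bd\,\Pi_2\nabla H$. Propositions \ref{p1} and \ref{p2} supply $\Pi_1\nabla C=\Pi_2\nabla H=\mathbf{0}$ and $\Pi_1\nabla H=\Pi_2\nabla C=\dot{\mathbf{x}}^t$, so the sum collapses to $(ad-bc)\dot{\mathbf{x}}^t=\dot{\mathbf{x}}^t$ by the normalization. The same expansion applied to $\Pi_{a,b}\cdot\nabla C_{a,b}=(a\Pi_1-b\Pi_2)(a\nabla C+b\nabla H)$ yields $(ab-ab)\dot{\mathbf{x}}^t=\mathbf{0}$, confirming that $C_{a,b}$ is a Casimir.

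The last point—that we really obtain infinitely many realizations—is immediate: $\{(a,b,c,d)\in\mathbb{R}^4:ad-bc=1\}$ is a three-dimensional variety, and inspection of, say, the $(1,2)$-entry $(a-b)z$ shows that different choices of $(a,b)$ give different Poisson tensors. The one genuinely nontrivial ingredient is the compatibility $[\Pi_1,\Pi_2]=0$, which the authors use tacitly; making it explicit is a short three-dimensional Schouten-bracket calculation, and this is the step I would write out in full. Everything else is linear algebra in the space of bivectors combined with the four identities already established in Propositions \ref{p1} and \ref{p2}.
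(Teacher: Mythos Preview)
Your proposal is correct and follows essentially the same route as the paper: the authors define $\Pi_{a,b}=a\Pi_1-b\Pi_2$, $H_{c,d}=cC+dH$, $C_{a,b}=aC+bH$, and then assert $\Pi_{a,b}\cdot\nabla H_{c,d}=\dot{\mathbf{x}}^t$ and $\Pi_{a,b}\cdot\nabla C_{a,b}=\mathbf{0}$ without writing out the bilinear expansion you perform. Your version is simply more explicit, in particular by spelling out why $\Pi_{a,b}$ satisfies the Jacobi identity via the compatibility $[\Pi_1,\Pi_2]=0$, a point the paper invokes only implicitly through the phrase ``compatible Poisson structures'' in Proposition~\ref{p2}.
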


\section{Energy-Casimir mapping}
In the geometric frame given by Proposition \ref{p1}, in this section we study some properties of the energy-Casimir mapping $\mathcal{EC}$ associated to system \eqref{1}. We present the image of this mapping. In addition, using the critical points of $\mathcal{EC}$ we obtain a partition of the image of the energy-Casimir mapping. This partition gives some connections with the dynamics of the considered system. 

Consider the Hamiltonian $H$ and a Casimir function $C$ given by \eqref{H-C}. The energy-Casimir mapping is given below
\begin{equation}\label{EC}
\mathcal{EC}:\mathbb{R}^3\to\mathbb{R}^2~,~~
\displaystyle\mathcal{EC}(x,y,z)=\left(\dfrac{1}{4}x^4+\dfrac{1}{4}y^4-\dfrac{1}{2}z^2,\dfrac{1}{2}x^2+\dfrac{1}{2}y^2+\dfrac{1}{2}z^2\right).
\end{equation}
The image of the energy-Casimir mapping is the set
\begin{equation*}
\mbox{Im}(\mathcal{EC})=\left\{(h,c)\in\mathbb{R}^2|(\exists)(x,y,z)\in\mathbb{R}^3:\mathcal{EC}(x,y,z)=(h,c)\right\}.
\end{equation*}
\begin{proposition}\label{p4}
Let $\mathcal{EC}$ be the energy-Casimir mapping \eqref{EC} associated to system \eqref{1}. Then 
\begin{equation}\label{ImEC}
\mbox{Im}(\mathcal{EC})=\{(h,c)\in\mathbb{R}^2|c\geq -h,c\geq \sqrt{h}\}.
\end{equation}
\end{proposition}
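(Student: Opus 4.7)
The plan is to establish the set equality by proving the two inclusions separately, the forward one via elementary inequalities and the reverse one by an intermediate value theorem argument on a convenient slice.

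For the forward inclusion $\mbox{Im}(\mathcal{EC})\subseteq\{(h,c):c\geq -h,\ c\geq\sqrt{h}\}$, I would fix $(x,y,z)\in\mathbb{R}^3$ and let $(h,c)=\mathcal{EC}(x,y,z)$. Adding the two components yields
$$h+c=\frac{1}{4}(x^4+y^4)+\frac{1}{2}(x^2+y^2)\geq 0,$$
which gives $c\geq -h$. For the second bound, I would combine $x^2+y^2\leq 2c$ (from $z^2\geq 0$) with $x^4+y^4\leq(x^2+y^2)^2$ (from $2x^2y^2\geq 0$) to obtain
$$h\leq\frac{1}{4}(x^4+y^4)\leq\frac{1}{4}(x^2+y^2)^2\leq c^2.$$
Since $c\geq 0$ is immediate from the formula for $C$, this reads $c\geq\sqrt{h}$ whenever $h\geq 0$; when $h<0$ the second condition is vacuous.

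For the reverse inclusion, given any $(h,c)$ satisfying both constraints I would construct a preimage on the slice $y=0$. Setting $t:=x^2$, the equation $C=c$ forces $z^2=2c-t$, which requires $t\in[0,2c]$; substituting into $H$ gives
$$\varphi(t)=\frac{1}{4}t^2+\frac{1}{2}t-c,$$
a continuous strictly increasing function on $[0,2c]$ with $\varphi(0)=-c$ and $\varphi(2c)=c^2$. Since the hypotheses together force $c\geq 0$ and $-c\leq h\leq c^2$, the intermediate value theorem supplies some $t\in[0,2c]$ with $\varphi(t)=h$, and then $(x,y,z)=(\sqrt{t},0,\sqrt{2c-t})$ is a preimage of $(h,c)$.

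Neither direction appears to present a real obstacle, as both reduce to elementary estimates and a single IVT argument. The only point calling for a brief case distinction is translating the two given constraints into the interval statement $-c\leq h\leq c^2$, since $\sqrt{h}$ is only meaningful when $h\geq 0$; the regime $h<0$ is handled by the first inequality $c\geq -h$ alone, together with $h<0\leq c^2$.
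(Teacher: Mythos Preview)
Your argument is correct. Both inclusions are established cleanly: the forward direction via the identities $h+c=\tfrac14(x^4+y^4)+\tfrac12(x^2+y^2)\ge 0$ and $h\le \tfrac14(x^2+y^2)^2\le c^2$, and the reverse direction by the explicit construction on the slice $y=0$ with the IVT applied to $\varphi(t)=\tfrac14 t^2+\tfrac12 t-c$ on $[0,2c]$. The small case analysis (deducing $c\ge 0$ from the hypotheses and handling $h<0$ separately) is also handled properly.

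As for comparison with the paper: the paper's proof consists of a single sentence (``Performing algebraic computations, we get the conclusion''), so there is no substantive argument to compare against. Your write-up is effectively a complete elaboration of what the paper leaves entirely to the reader; in particular, your explicit preimage on the $y=0$ slice and the monotonicity of $\varphi$ make the surjectivity step transparent, which the paper does not address at all.
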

\begin{proof}
The pair $(h,c)$ belongs to $\mbox{Im}(\mathcal{EC})$ if and only if the system 
$$\dfrac{1}{4}x^4+\dfrac{1}{4}y^4-\dfrac{1}{2}z^2=h~,~~\dfrac{1}{2}x^2+\dfrac{1}{2}y^2+\dfrac{1}{2}z^2=c$$
is compatible. Performing algebraic computations, we get the conclusion.
\end{proof}
\begin{remark}\label{r1}
The energy-Casimir mappings of some particular Hamilton-Poisson systems were studied in many papers. In some cases the image of $\mathcal{EC}$ is $\mathbb{R}^2$ \cite{BinLaz13a,TudGir12}, in other cases it is a closed convex subset of $\mathbb{R}^2$ \cite{BinLaz13b,Lazureanu17a,Lazureanu17b,Lazureanu17c,Lazureanu17d,LazBin12a,LazBin12b,LazBin17,TudAroNic09}. In \cite{LazPet18} $\mbox{Im}(\mathcal{EC})$ is not a closed set. In our case the image of the considered energy-Casimir mapping is shown in Figure \ref{fig:1}. It is a closed non-convex set, what explains while we choose those constants of motion given by \eqref{H-C}.
\end{remark}
\begin{figure}[h!]
\centering
\includegraphics[width=0.5\linewidth]{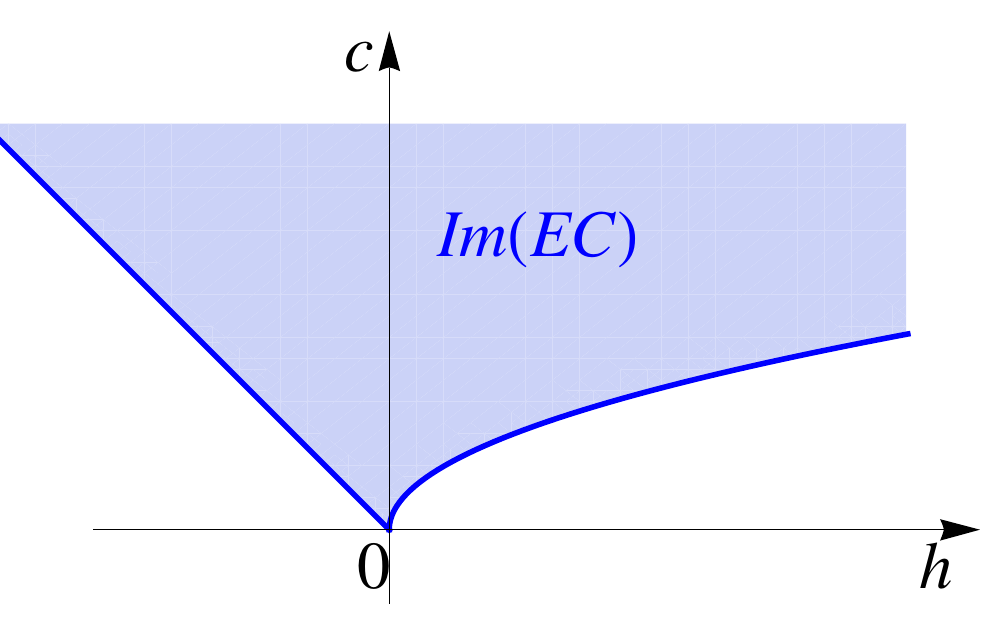}
\caption{The image of the energy-Casimir mapping.}
\label{fig:1}\end{figure}

A point $(x_0,y_0,z_0)\in\mathbb{R}^3$ is a critical point of the energy-Casimir mapping if the rank of the Jacobian matrix of $\mathcal{EC}$ at this point is less than 2.
\begin{proposition}\label{p5}
The critical points of the energy-Casimir mapping \eqref{EC} are given by the following families
\begin{equation}\label{echilibre}
E_1(M,0,0),E_2(0,M,0),E_3(0,0,M),E_4(M,M,0),E_5(M,-M,0),M\in\mathbb{R}.
\end{equation}
\end{proposition}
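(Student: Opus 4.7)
The plan is to compute the $2\times 3$ Jacobian matrix of $\mathcal{EC}$ and impose that all three of its $2\times 2$ minors vanish simultaneously, which is the rank-deficiency condition.

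First I would write down the Jacobian
\[
J_{\mathcal{EC}}(x,y,z)=\begin{pmatrix} x^3 & y^3 & -z \\ x & y & z \end{pmatrix},
\]
and form the three minors obtained by deleting one column at a time. The minor on columns $(2,3)$ gives $yz(1+y^2)$, the minor on columns $(1,3)$ gives $xz(1+x^2)$, and the minor on columns $(1,2)$ gives $xy(x^2-y^2)=xy(x-y)(x+y)$. These are, up to sign, exactly the components of the vector field \eqref{1}, which is a pleasant sanity check since the orbits of the system lie in level sets of both $H$ and $C$. The critical set of $\mathcal{EC}$ is therefore the zero locus of these three polynomials.

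Next I would solve the system by a short case analysis on $z$. Since the factors $1+x^2$ and $1+y^2$ are strictly positive over $\mathbb{R}$, the first two vanishing conditions reduce to $xz=0$ and $yz=0$. If $z\neq 0$, then $x=y=0$, producing the family $E_3(0,0,M)$. If $z=0$, the only remaining condition is $xy(x-y)(x+y)=0$, whose four linear factors yield in turn the families $E_1(M,0,0)$, $E_2(0,M,0)$, $E_4(M,M,0)$ and $E_5(M,-M,0)$. Taking the union and noting that the origin belongs to every family recovers exactly the list \eqref{echilibre}.

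There is essentially no obstacle here; the only mild subtlety is recognising that $1+x^2$ and $1+y^2$ never vanish, so the minors factor cleanly and the case split on $z$ terminates quickly. The whole argument is a straightforward computation followed by elementary polynomial factoring.
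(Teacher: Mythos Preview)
Your argument is correct and follows exactly the paper's approach: write down the Jacobian $D\mathcal{EC}$ and impose $\operatorname{rank}<2$. The paper's own proof is very terse (it simply states the Jacobian and says ``imposing the condition $\operatorname{rank}\,D\mathcal{EC}<2$, the conclusion follows''), whereas you spell out the three minors and the case split on $z$, which is a welcome amplification rather than a different method.
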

\begin{proof}
We have $$D\mathcal{EC}(x,y,z)=\left[\begin{array}{c}
DH(x,y,z)\\DC(x,y,z)
\end{array}\right]=\left[\begin{array}{ccc}
x^3&y^3&-z\\x&y&z
\end{array}\right].$$
Imposing the condition rank$\,D\mathcal{EC}<2$, the conclusion follows.
\end{proof}
Now we determine the images of these critical points through the energy-Casimir mapping. We have
\begin{align*}
&\mathcal{EC}(E_1)=\mathcal{EC}(E_2)=\left(\frac{1}{4}M^4,\frac{1}{2}M^2\right)=(h,c)~,~c=\sqrt{h}~,~h\geq 0,\\
&\mathcal{EC}(E_3)=\left(-\frac{1}{2}M^2,\frac{1}{2}M^2\right)=(h,c)~,~~c=-h~,~h\leq 0,\\
&\mathcal{EC}(E_4)=\mathcal{EC}(E_5)=\left(\frac{1}{2}M^4,M^2\right)=(h,c)~,~c=\sqrt{2h}~,~h\geq 0.
\end{align*} 
The images through the energy-Casimir mapping of its critical points are given by the curves (see Figure \ref{fig:2})
\begin{align*}
&\Sigma_{1,2}^s=\{(h,c):c=\sqrt{h}~,~h\geq 0\},\\
&\Sigma_{3}^s=\{(h,c):c=-h~,~h\leq 0\},\\
&\Sigma_{4,5}^u=\{(h,c):c=\sqrt{2h}~,~h>0\}.
\end{align*} 
We also consider the sets
\begin{align*}
&\Sigma_{p}^1=\{(h,c):\sqrt{h}<c<\sqrt{2h}~,~h> 0\},\\
&\Sigma_{p}^2=\{(h,c):c>-h~,~h<0\}\cup\{(h,c):c>\sqrt{2h}~,~h>0\}.
\end{align*}
\begin{remark}\label{r2}
The images of the critical points through the energy-Casimir mapping lead to the following partition of the image of the energy-Casimir mapping
\begin{equation}\label{partition}
\mbox{Im}(\mathcal{EC})=\Sigma_{1,2}^s\cup\Sigma_{p}^1\cup\Sigma_{4,5}^u\cup\Sigma_{p}^2\cup\Sigma_{3}^s.
\end{equation}
Note that there is only one bifurcation point in this partition, namely $(0,0)$.
\end{remark}
\begin{figure}[h!]
\centering
\includegraphics[width=0.5\linewidth]{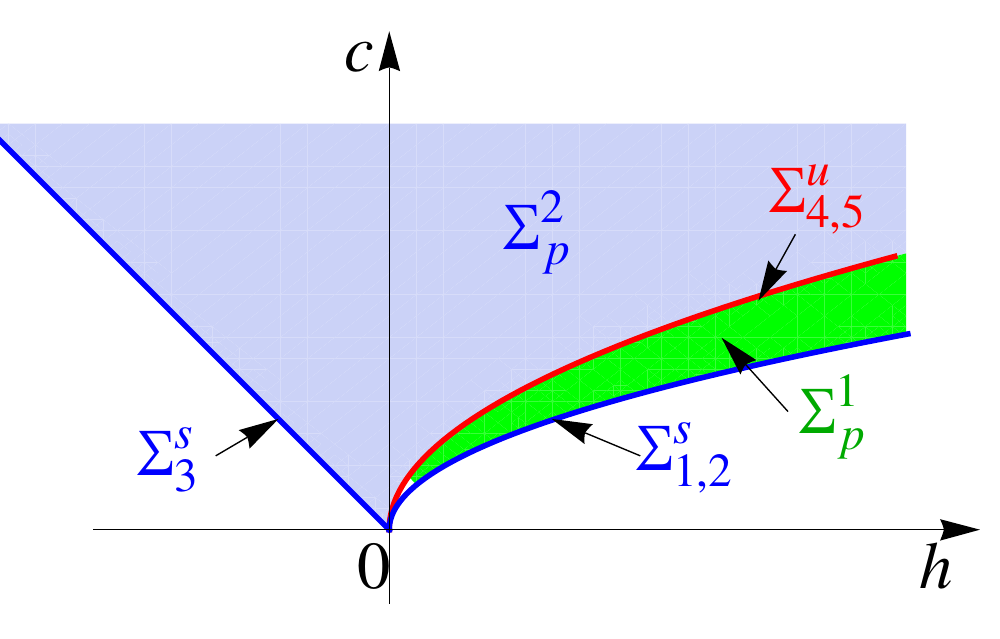}
\caption{The semialgebraic partition of the image of the energy-Casimir mapping given by the critical points.}
\label{fig:2}\end{figure}
As it has been reported in the above-mentioned papers (Remark \ref{r1}), there are some connections between the partition of the image of the energy-Casimir mapping and the dynamics of the corresponding system. More precisely, in the case when $\mbox{Im}(\mathcal{EC})$ is a proper convex subset of $\mathbb{R}^2$ its boundary is the union of the images of the nonlinearly stable equilibrium points through $\mathcal{EC}$. Moreover, if a curve that gives the partition and belongs to the interior of $\mbox{Im}(\mathcal{EC})$ do not have bifurcation points, then it is given by the images of the unstable equilibrium points through $\mathcal{EC}$. Furthermore, if such a curve is an arc of parabola, then homoclinic orbits were computed.  
In addition, the open subsets $\Sigma_p$ of $\mbox{Im}(\mathcal{EC})$ are related to periodic orbits.

In our case $\mbox{Im}(\mathcal{EC})$ is a non-convex set. Therefore it is natural to ask whether these properties remain true, namely the critical points $E_1,E_2,E_3$ are stable equilibrium points and $E_4,E_5$ are unstable, and also there are periodic orbits in the considered dynamics. Moreover, are there homoclinic orbits? 

In next sections we give answers to these questions.
\section{Stability}
In this section we study the stability of the equilibrium points of system \eqref{1}. We use Arnold stability test \cite{Arnold65}, Lyapunov functions, and First Lyapunov's Stability Criterion \cite{Lyapunov49}.   

It is easy to see that system \eqref{1} takes the form $\dot{\bf x}=\nabla H\times\nabla C$, ${\bf x}=(x,y,z).$ Therefore the equilibrium points of system \eqref{1} are in fact the critical points \eqref{echilibre} of the energy-Casimir mapping \eqref{EC}.

In the next proposition we give the results regarding the stability of the equilibrium points  $$E_1(M,0,0), E_2(0,M,0), E_3(0,0,M), E_4(M,M,0),E_5(M,-M,0), M\in\mathbb{R}.$$
\begin{proposition} 
a) The points $E_1,E_2,E_3$ are nonlinearly stable equilibrium points for every $M\in\mathbb{R}$.\\
b) The equilibrium points $E_4,E_5$ are unstable for every $M\in\mathbb{R},M\not=0$.
\end{proposition}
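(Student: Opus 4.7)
The plan is to split the five families into three regimes and apply a different tool in each: the degenerate origin ($M=0$) is handled by a direct Lyapunov function, the equilibria $E_1,E_2,E_3$ with $M\neq 0$ by Arnold's test, and $E_4,E_5$ with $M\neq 0$ by the first Lyapunov instability criterion. Note first that when $M=0$ all five families collapse to the origin; there the Casimir $C=\frac{1}{2}(x^2+y^2+z^2)$ itself serves as a strict Lyapunov function, since $C(0)=0$, $C>0$ off the origin, and $\dot C\equiv 0$ along \eqref{1} because $C$ is a constant of motion. This already gives Lyapunov stability of the origin, covering the $M=0$ case of part (a).

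For $M\neq 0$ in part (a), the approach is Arnold's test applied to $V_\lambda:=H+\lambda C$. The multiplier $\lambda$ is fixed by imposing $\nabla V_\lambda(E_i)=0$: a short calculation forces $\lambda=-M^2$ at $E_1$ and $E_2$, and $\lambda=1$ at $E_3$. I would then compute the Hessian $d^2V_\lambda$ at the equilibrium and restrict it to $\ker dC(E_i)$, which is the coordinate plane orthogonal to $\nabla C(E_i)$. The restrictions turn out to be $\operatorname{diag}(-M^2,-1-M^2)$ at $E_1$ (negative definite), the same at $E_2$ by the $x\leftrightarrow y$ symmetry of $H$, $C$ and \eqref{1}, and $\operatorname{diag}(1,1)$ at $E_3$ (positive definite). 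Arnold's test then yields nonlinear stability in each of these three cases.

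For part (b) I would linearize \eqref{1} at $E_4(M,M,0)$. A direct computation of the Jacobian produces
\[
A=\begin{pmatrix}0&0&M(1+M^2)\\ 0&0&-M(1+M^2)\\ 2M^3&-2M^3&0\end{pmatrix},
\]
whose characteristic polynomial factors as $-\lambda\bigl(\lambda^2-4M^4(1+M^2)\bigr)$, giving the spectrum $\{0,\pm 2M^2\sqrt{1+M^2}\}$. For $M\neq 0$ this contains the strictly positive real eigenvalue $2M^2\sqrt{1+M^2}$, so the first Lyapunov instability criterion forces $E_4$ to be unstable. The case $E_5(M,-M,0)$ is handled by the analogous linearization.

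The routine work is the Hessian restrictions and the $3\times 3$ characteristic polynomial. The only real subtlety I anticipate is at the Arnold step: in each stability case the full Hessian on $\mathbb{R}^3$ is indefinite (the eigenvalues have mixed signs), so it is essential to restrict precisely to the two-dimensional subspace $\ker dC$, on which the quadratic form then becomes definite. This is also the structural reason why the origin, where $\nabla C$ vanishes and the symplectic leaf degenerates to a point, must be dealt with by the separate direct Lyapunov argument with $V=C$.
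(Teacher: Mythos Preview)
Your proposal is correct and follows essentially the same approach as the paper: the Casimir $C$ as a Lyapunov function at the origin, Arnold's test with $F=H+\lambda C$ for $E_1,E_2,E_3$ when $M\neq 0$, and linearization with the First Lyapunov Criterion for $E_4,E_5$. Your explicit choice $\lambda=1$ at $E_3$, the displayed Jacobian at $E_4$, and the emphasis on restricting the Hessian to $\ker dC$ (rather than working on all of $\mathbb{R}^3$, where it is indefinite) make the argument slightly more explicit than the paper's, but the methods and computations coincide.
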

\begin{proof}
a) Let $M\in\mathbb{R},M\not=0$ and the equilibrium point $E_1(M,0,0)$. We consider the function $F=H+\lambda C$. The condition $\nabla F(M,0,0)={\bf 0}$ leads to $\lambda=-M^2$. Using the fact that $dC(M,0,0)=0$, we obtain $d^2F(M,0,0)=-M^2dy^2-(M^2+1)dz^2$ that is negative definite. By Arnold stability test we deduce that the equilibrium point $E_1$ is nonlinearly stable. We analogously proceed for the equilibrium points $E_2$ and $E_3$.

If $M=0$, then all the equilibrium points coincide. In this case the Casimir $C(x,y,z)=\frac{1}{2}x^2+\frac{1}{2}y^2+\frac{1}{2}z^2$ is a Lyapunov function for the equilibrium point $(0,0,0)$ and $\frac{dC}{dt}=0$. Therefore the equilibrium point $(0,0,0)$ is nonlinearly stable.\\
b) Let $J(x,y,z)$ be the matrix of linear part of system \eqref{1},
 that is $$J(x,y,z)=\!\left[\begin{array}{ccc}
  0& 3y^2z+z&y^3+y\\
 -3x^2z-z&0&-x^3-x\\3x^2y-y^3&x^3-3xy^2&0
\end{array}\right].$$
The characteristic roots of $J(E_4)$ and $J(E_5)$ are given by
$$\lambda_{1}=0,\ \ \lambda_{2,3}=\pm 2M^2\sqrt{M^2+1}.$$
Therefore, for every $M\in\mathbb{R},M\not=0$ there is a positive eigenvalue and consequently the equilibrium points $E_4$ and $E_5$ are unstable. 
\end{proof}
\begin{remark}
The images of the nonlinearly stable equilibrium points through the energy-Casimir mapping are the curves $\Sigma_{1,2}^s$ and $\Sigma_3^s$, where the superscript ``s'' means stable, as in above-mentioned papers. Moreover, the set $\Sigma_{4,5}^u$ is the image of the unstable equilibrium points through $\mathcal{EC}$.
\end{remark}

\section{Fibers of the energy-Casimir mapping}
The fiber of the energy-Casimir mapping $\mathcal{EC}$ corresponding to an element $(h_0,c_0)\in\mbox{Im}(\mathcal{EC})$ is the set
\begin{equation}\label{F}
\mathcal{F}_{(h_0,c_0)}=\left\{(x,y,z)\in\mathbb{R}^3~|~\mathcal{EC}(x,y,z)=(h_0,c_0)\right\}.
\end{equation}
The implicit equation of the above fiber is given by
\begin{equation}\label{IF}
\mathcal{F}_{(h_0,c_0)}:\left\{\begin{array}{l}
H(x,y,z)=h_0\vspace{4pt}\\C(x,y,z)=c_0~.
\end{array}\right.
\end{equation}
On the other hand the dynamics of the considered system takes place at the intersection of the level sets $H(x,y,z)=constant$, $C(x,y,z)=constant$. Therefore an orbit of the our system is given implicitly by the above fiber.

Taking into account the partition \eqref{partition} of $\mbox{Im}(\mathcal{EC})$, in this section we point out the topology of the fibers of energy-Casimir mapping \eqref{EC}. We prove the existence of the periodic orbits. Using numerical integration, we emphasize a possible existence of some heteroclinic cycles.

Let $M>0$. Fixing $c=c_0>0$ and varying $h$ such that $-c_0\leq h\leq c_0^2$, the straight-line of equation $c=c_0$ intersects all the sets of the partition \eqref{partition} of the image of the energy-Casimir mapping. The intersections of the level sets $H(x,y,z)=h$ and $C(x,y,z)=c_0$ when $(h,c_0)$ belongs to $\Sigma_{1,2}^s$, $\Sigma_{p}^1$, $\Sigma_{4,5}^u$, $\Sigma_{p}^2$, and $\Sigma_{3}^s$ are presented in Figure \ref{fig:3} (a), (b), (c), (d)-(e), and (f)  respectively. We notice that around the equilibrium points $E_1(\pm M,0,0)$ and $E_2(0,\pm M,0)$ (Figure \ref{fig:3} (a)) there are four families of periodic orbits (Figure \ref{fig:3} (b)) which collide (Figure \ref{fig:3} (c)) when $h$ decreases and takes the value $\frac{1}{2}c_0^2$. Four orbits are obtained and they are contained in the intersection of the level sets $H(x,y,z)=h$ and $C(x,y,z)=c_0$, where $(h,c_0)\in\Sigma_{4,5}^u$. Also, in this case, the fiber $\mathcal{F}_{(h,c)}$ contains the unstable equilibrium points $E_4$ and $E_5$. Afterwards these orbits split in two families of periodic orbits around $E_3(0,0,\pm M)$ (Figure \ref{fig:3} (d),(e)) which tend to $E_3$ as $h\to -c_0$ (Figure \ref{fig:3} (f)). Therefore we deduce that apparently the above-mentioned four orbits are heteroclinic orbits.
 
\begin{figure}[h]
\centering
\subfigure[$(h,c)\in\Sigma_{1,2}^s$]{\label{fig:3a}\includegraphics[width=.3\linewidth]{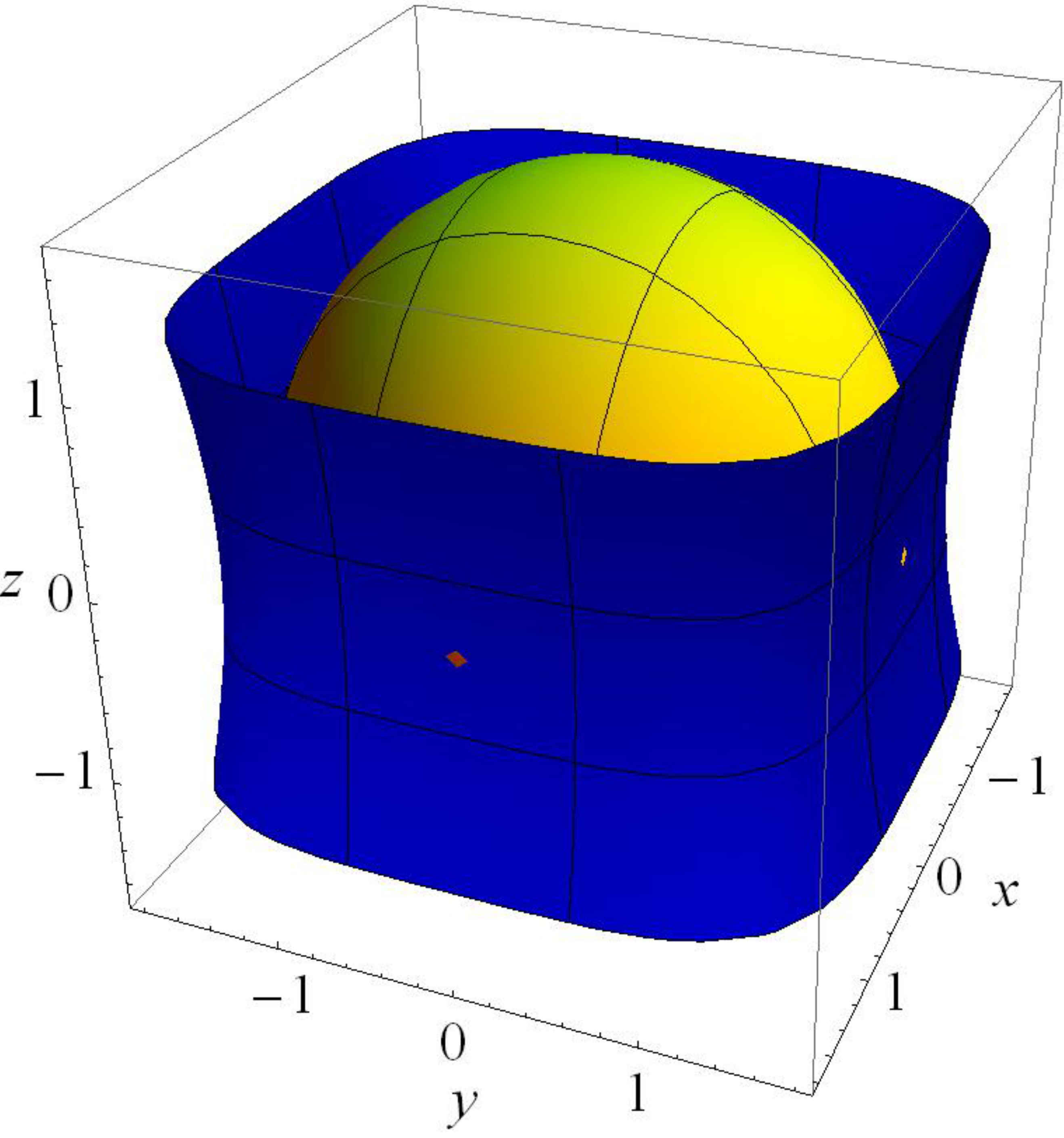}}\hspace{0.3cm}
\subfigure[$(h,c)\in\Sigma_{p}^1$]{\label{fig:3b}\includegraphics[width=.3\linewidth]{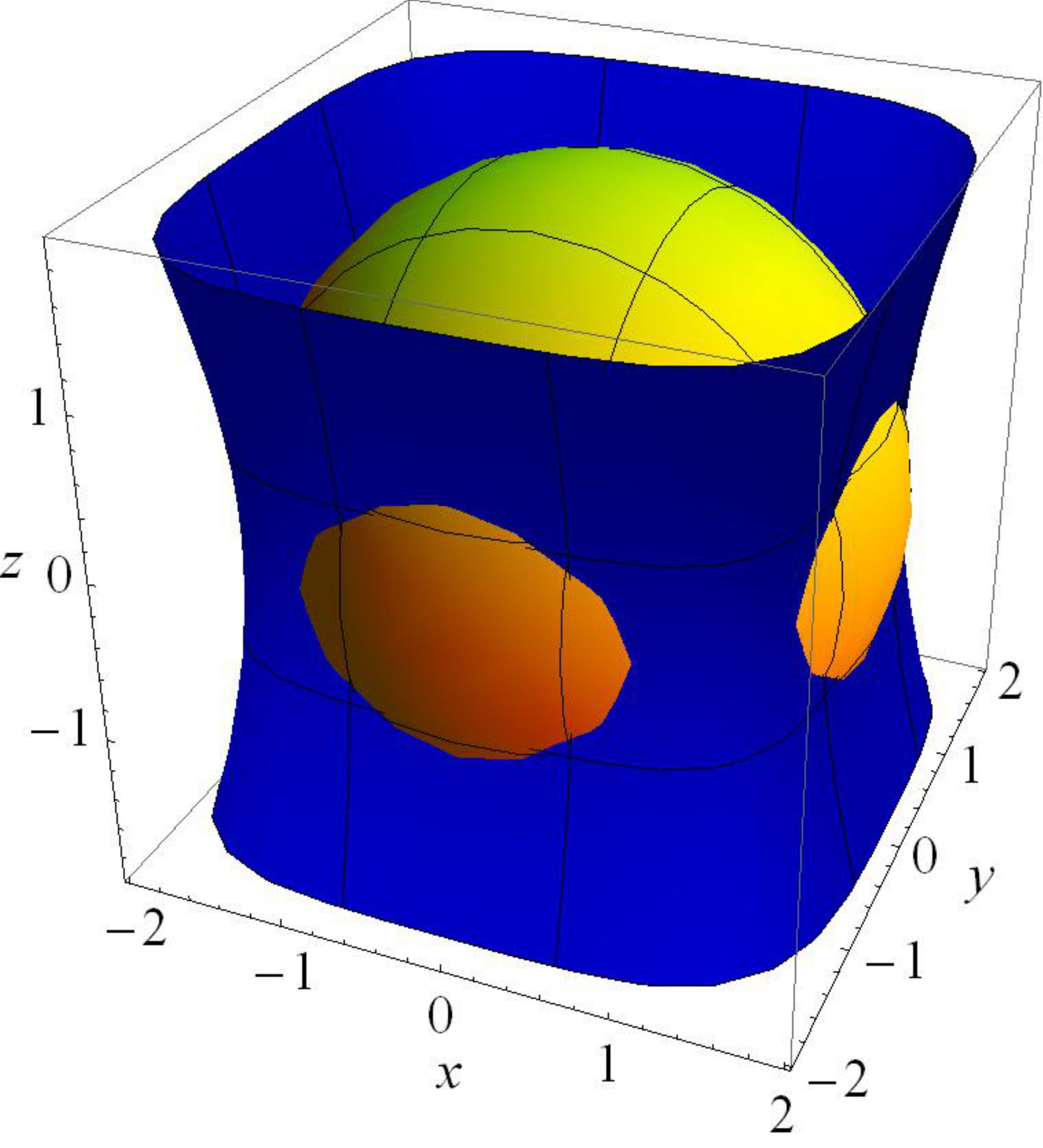}}\hspace{0.3cm}
\subfigure[$(h,c)\in\Sigma_{4,5}^u$]{\label{fig:3c}\includegraphics[width=0.3\linewidth]{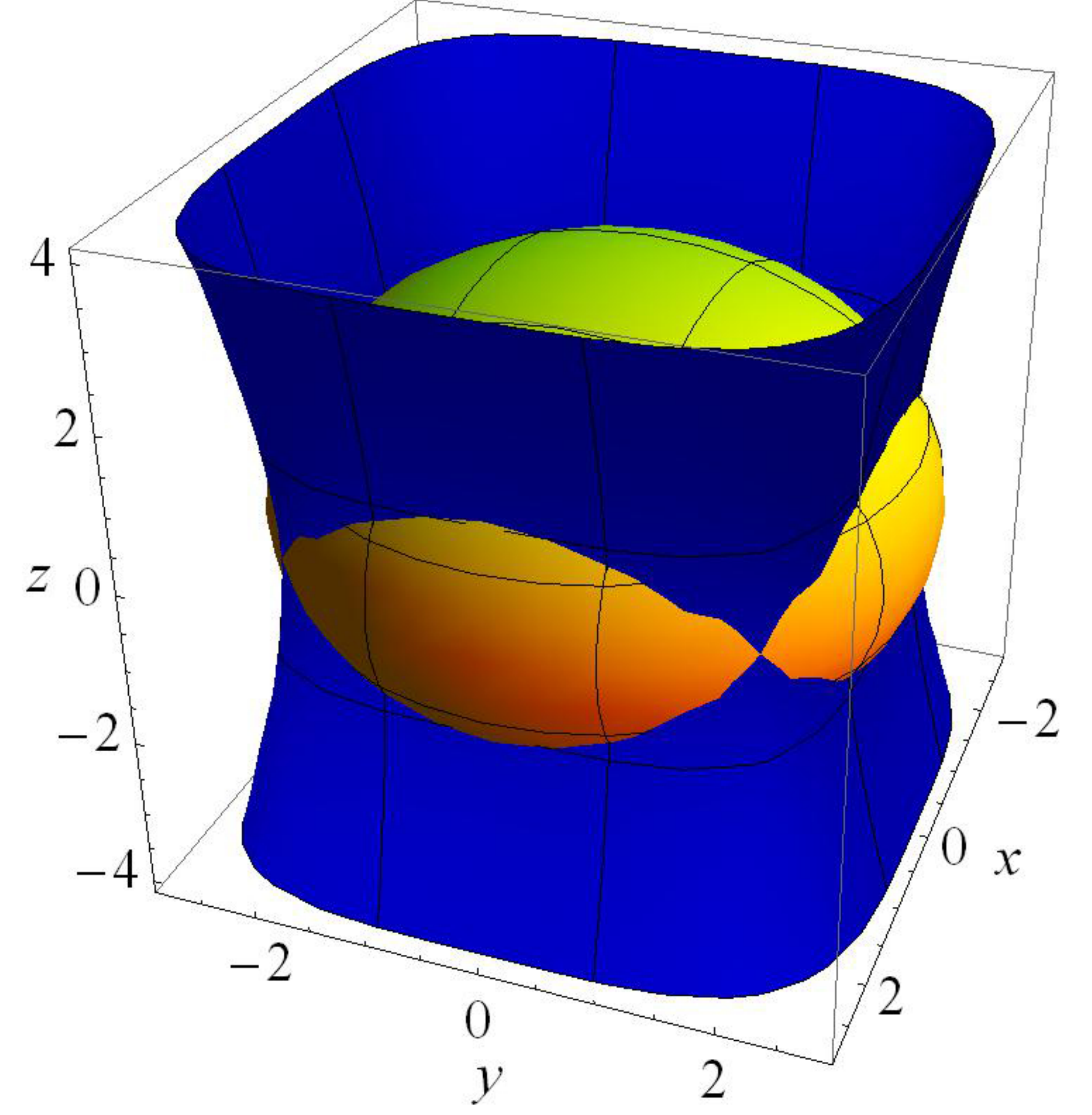}}
\subfigure[$(h,c)\in\Sigma_{p}^2,h\geq 0$]{\label{fig:3d}\includegraphics[width=.3\linewidth]{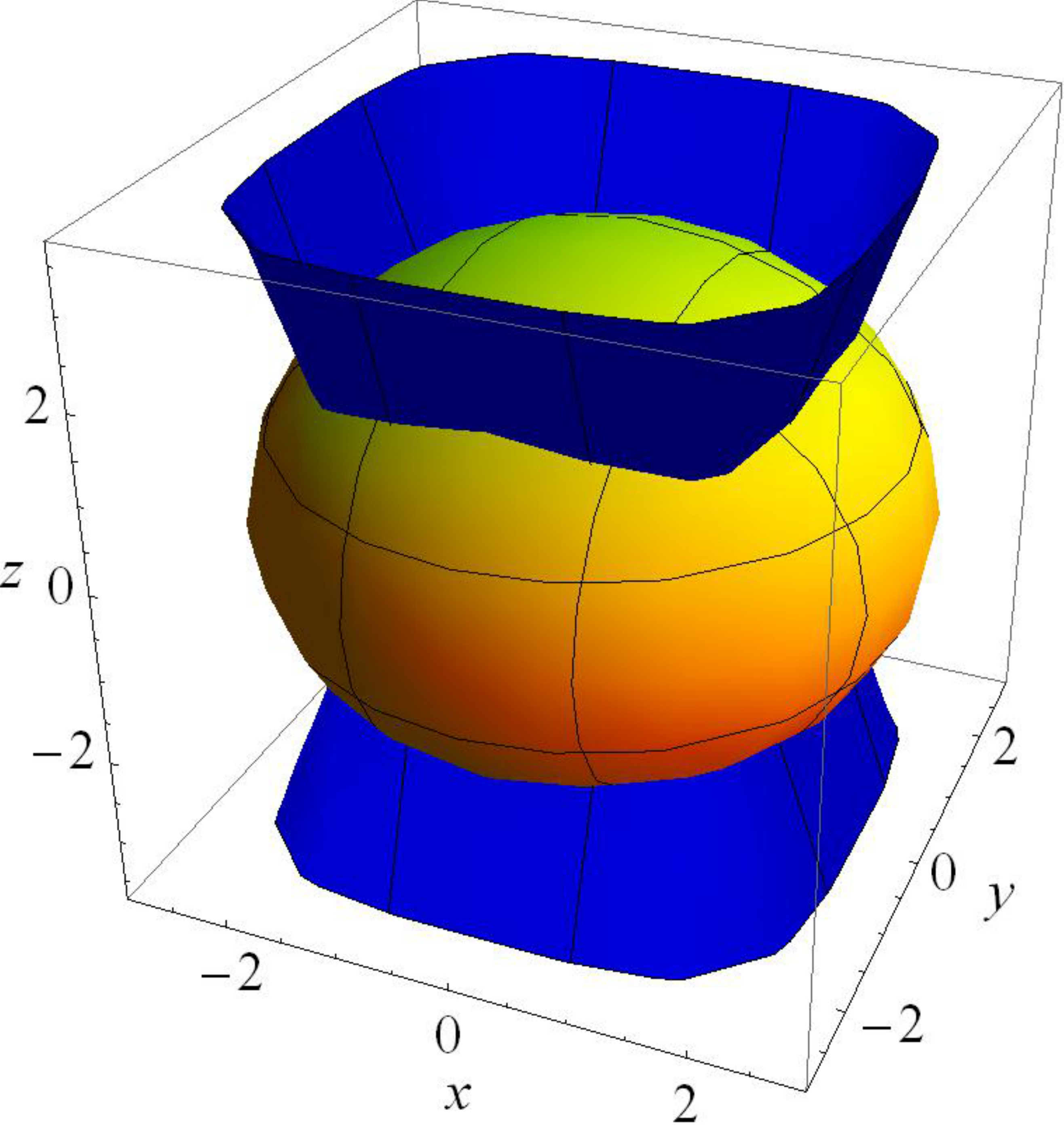}}
\hspace{0.3cm}
\subfigure[$(h,c)\in\Sigma_{p}^2,h<0$]{\label{fig:3e}\includegraphics[width=.3\linewidth]{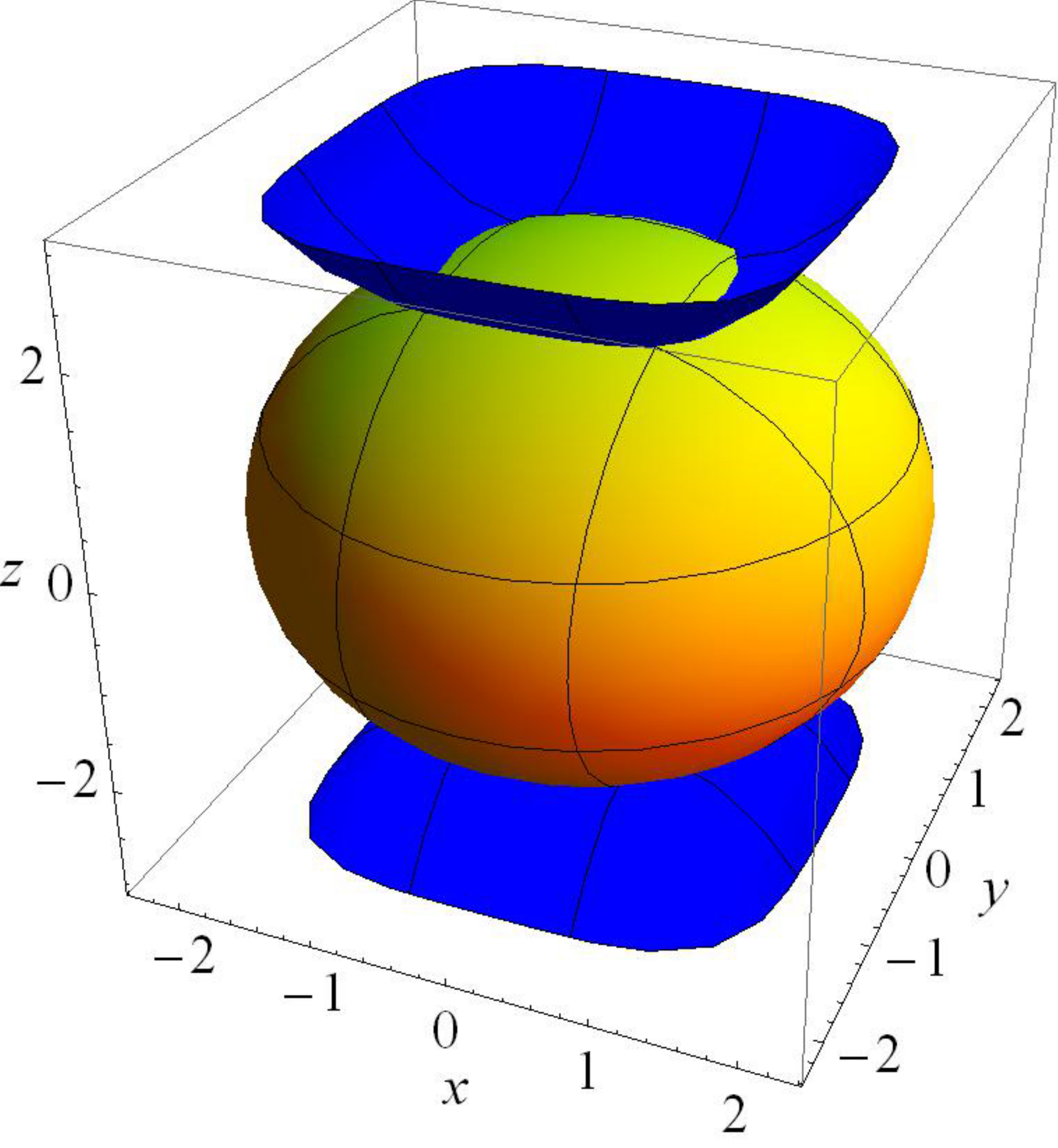}}\hspace{0.3cm}
\subfigure[$(h,c)\in\Sigma_{3}^s$]{\label{fig:3f}\includegraphics[width=.3\linewidth]{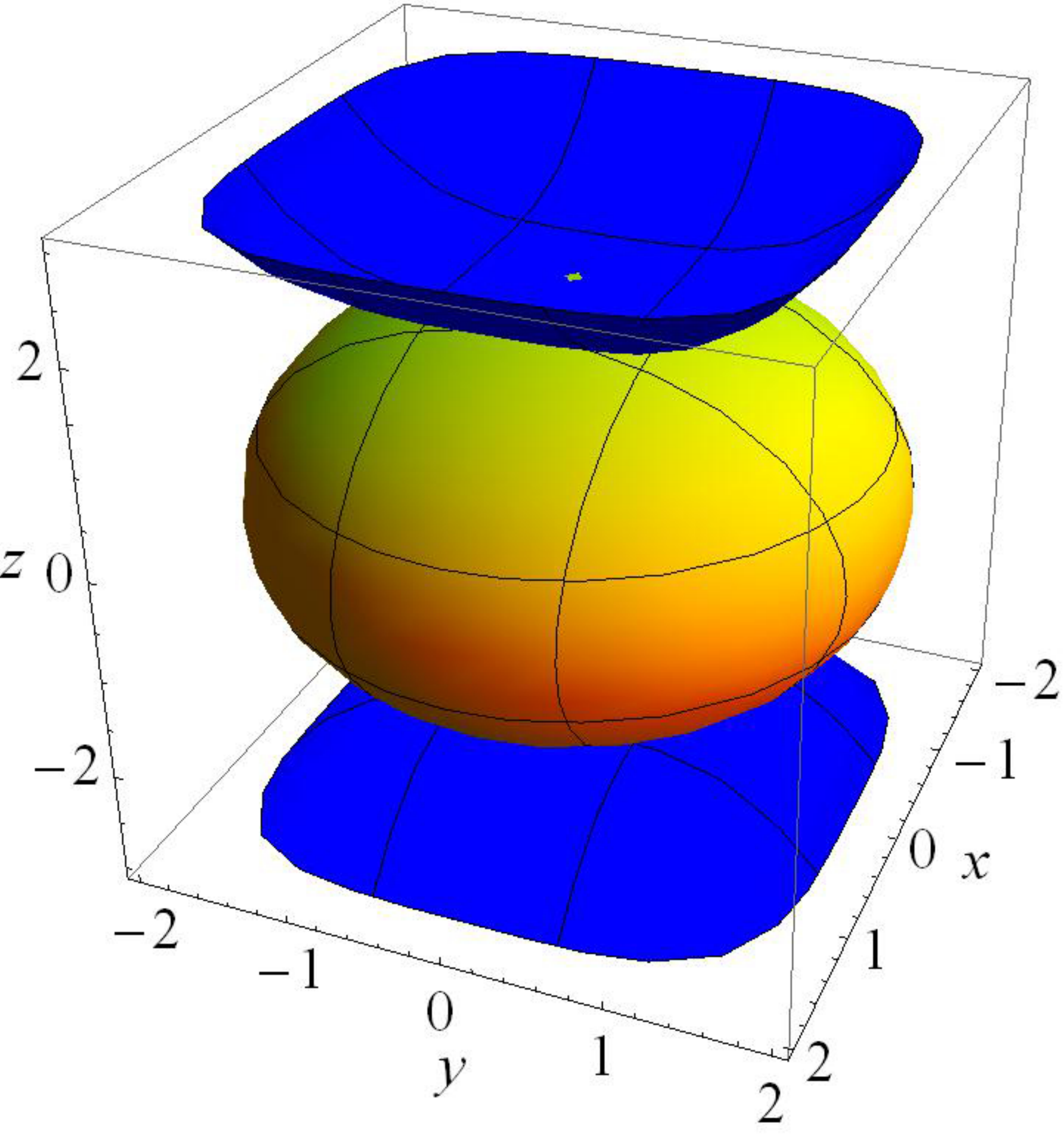}}
\caption{Intersections of the level sets $H(x,y,z)=h$ and $C(x,y,z)=c$, namely 
(a) and (f): stable equilibrium points; (b),(d),(e): periodic orbits; (c): heteroclinic orbits.}
\label{fig:3}
\end{figure}

We begin our study with the pairs $(h,c)$ that belong to the boundary of $\mbox{Im}(\mathcal{EC})$. 
\begin{proposition}
Let $(h,c)\in\Sigma_{1,2}^s\cup\Sigma_3^s$. Then the corresponding fiber contains only nonlinear stable equilibrium points, namely:\\
a) If $(h,c)\in\Sigma_{1,2}^s$, then $\mathcal{F}_{(h,c)}=\{(\pm\sqrt{2c},0,0)\}\cup\{(0,\pm\sqrt{2c},0)\}$ (Figure \ref{fig:3} (a));\vspace{4pt}\\
b) If $(h,c)\in\Sigma_3^s$, then $\mathcal{F}_{(h,c)}=\{(0,0,\pm\sqrt{2c})\}$ (Figure \ref{fig:3} (f));\vspace{4pt}\\
c) If $(h,c)\in\Sigma_{1,2}^s\cap\Sigma_3^s$, then $\mathcal{F}_{(h,c)}=\{(0,0,0)\}$.
\end{proposition}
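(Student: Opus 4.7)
My plan is to solve the defining system \eqref{IF} directly on the boundary curves by eliminating one variable and exploiting positivity to collapse inequalities to equalities.

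For part (a), with $(h,c)\in\Sigma_{1,2}^s$ I have $h=c^2$ and $c\geq 0$. The idea is to introduce $u=x^2$, $v=y^2$, $w=z^2$ (all nonnegative) so the defining equations become $u^2+v^2-2w=4c^2$ and $u+v+w=2c$. Eliminating $w=2c-u-v$ (which imposes the constraint $u+v\leq 2c$ so that $w\geq 0$), the system reduces to a single conic
\[
(u+1)^2+(v+1)^2=4c^2+4c+2
\]
in the first quadrant. The heart of the argument is to show that every nonnegative solution of this conic satisfies $u+v\geq 2c$, with equality iff $uv=0$; combined with the constraint $u+v\leq 2c$, this forces one of $u,v$ to vanish and pins down the other. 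I would prove this inequality by setting $s=u+v$, $p=uv$ and rewriting the conic as $s^2+2s-4c^2-4c=2p\geq 0$; since $f(s)=s^2+2s$ is strictly increasing on $[0,\infty)$, this yields $s\geq 2c$. Tracing back through $w=0$ and $u+v=2c$ with $uv=0$ then produces exactly the four points $(\pm\sqrt{2c},0,0)$ and $(0,\pm\sqrt{2c},0)$.

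For part (b), with $(h,c)\in\Sigma_3^s$ I have $h=-c$, $c\geq 0$. Here the trick is simply to add the two equations: $H+C=\tfrac14 x^4+\tfrac14 y^4+\tfrac12 x^2+\tfrac12 y^2=0$. Since every term is nonnegative, one immediately concludes $x=y=0$, and then the Casimir equation gives $z=\pm\sqrt{2c}$. Part (c) is then trivial: intersecting $c=\sqrt h$ with $c=-h$ forces $h=c=0$, and either of the two descriptions collapses to the origin.

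The only genuinely delicate step is the quadratic inequality in part (a); after that, everything else is symbolic bookkeeping. A minor care point is the degenerate case $c=0$ in part (a), where the conic degenerates to the single point $(u,v)=(0,0)$ and the four equilibria coalesce at the origin, which is consistent with (c). I would also note that the proposition confirms, for $(h,c)$ on the boundary curves $\Sigma_{1,2}^s$ and $\Sigma_3^s$, that the corresponding fibers coincide exactly with the stable equilibrium families $E_1,E_2$ and $E_3$ from \eqref{echilibre}, matching the general philosophy recalled in Remark~\ref{r1}.
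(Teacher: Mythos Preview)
Your argument is correct in all three parts, but for part (a) it takes a noticeably longer route than the paper. The paper simply substitutes $c=\tfrac12(x^2+y^2+z^2)$ into $h=c^2$ and rearranges the $H$-equation to obtain the single identity
\[
z^2\bigl(z^2+2+2x^2+2y^2\bigr)+2x^2y^2=0,
\]
a sum of nonnegative terms, which immediately forces $z=0$ and $xy=0$. Your approach---passing to $u=x^2$, $v=y^2$, $w=z^2$, eliminating $w$, and then squeezing $u+v$ between $2c$ (from $w\geq 0$) and $2c$ (from the symmetric-function inequality $s^2+2s\geq 4c^2+4c$)---is more systematic but less economical. What your method buys is a transparent reduction to a one-variable monotonicity statement, which would generalize more readily if $H$ and $C$ were changed; what the paper's method buys is a one-line positivity argument with no case analysis. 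For parts (b) and (c) your argument is essentially the same as the paper's: adding $H$ and $C$ kills the $z^2$ terms and leaves a sum of nonnegatives equal to zero, which is exactly the ``analogously'' the paper alludes to.
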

\begin{proof}
a) By hypothesis, $c=\sqrt{h}$. Using \eqref{F} and \eqref{H-C}, we obtain $$z^2(z^2+2+2x^2+2y^2)+2x^2y^2=0.$$ Hence $z=0$ and $xy=0$. Therefore the conclusion follows.

Analogously we prove b) and c).
\end{proof}

We study the others cases in the next subsections.

\subsection{Periodic orbits}
Let $(h,c)\in\Sigma_{p}^1\cup\Sigma_p^2$, where $\Sigma_{p}^1$ and $\Sigma_{p}^2$ are given by \eqref{partition}. The intersection of the level sets $H(x,y,z)=h$ and $C(x,y,z)=c$ suggests the existence of the periodic orbits (see Figure \ref{fig:3} (b),(d),(e)). We prove the existence of these periodic orbits around the nonlinear  stable equilibrium points using a version of Moser's theorem in the case of zero eigenvalue \cite{BiPuTu07}.

\begin{proposition}\label{prop9a}
Let $E_1=\left(M,0,0\right)$ be a nonlinear stable equilibrium point of system (\ref{1}) such that $M\in\mathbb{R}^*$. Then for each sufficiently small
$\varepsilon\in\mathbb{R}_+^*$, any integral surface
$$
\Sigma_\varepsilon^{E_1}\,:\,~-\frac{1}{4}(x^4+y^4)+\frac{M^2}{2}(x^2+y^2)+\frac{M^2+1}{2}z^2-\frac{1}{4}M^4=\varepsilon^2
$$
contains at least one periodic orbit $\gamma_\varepsilon^{E_1}$ of system (\ref{1}) whose period is close to $\frac{2\pi}{M^2\sqrt{M^2+1}}$.
\end{proposition}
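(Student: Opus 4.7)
The plan is to recognize the left-hand side of the equation defining $\Sigma_\varepsilon^{E_1}$ as a suitable first integral of system \eqref{1} and then invoke the zero-eigenvalue version of Moser's theorem from \cite{BiPuTu07}.

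First, I would introduce
$$F_1(x,y,z):=M^2 C(x,y,z)-H(x,y,z)-\frac{M^4}{4},$$
and note, using \eqref{H-C}, that $\Sigma_\varepsilon^{E_1}$ is precisely the level set $\{F_1=\varepsilon^2\}$. Since $F_1$ is an affine combination of the constants of motion $H$ and $C$, it is itself a constant of motion, so each such level surface is invariant under the flow of \eqref{1}. A direct computation gives $F_1(E_1)=0$ and $\nabla F_1(E_1)=\mathbf{0}$, the latter being essentially the content of the Arnold computation with $\lambda=-M^2$ already carried out in the stability proof for $E_1$.

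Next, I would analyze the linear data at $E_1$. Evaluating the Jacobian $J$ from the previous section at $E_1$ yields a matrix whose kernel is the $x$-axis and whose action on the $(y,z)$-plane has characteristic roots $\pm iM^2\sqrt{M^2+1}$. The Hessian of $F_1$ at $E_1$ equals
$$d^2F_1(E_1)=\operatorname{diag}\bigl(-2M^2,\;M^2,\;M^2+1\bigr),$$
which is indefinite on $\mathbb{R}^3$ but restricts to the positive definite form $\operatorname{diag}(M^2,\,M^2+1)$ on the two-dimensional invariant subspace $\{(0,y,z)\}$ corresponding to the oscillating eigenvalue pair. The $x$-direction, in which $d^2F_1$ is negative, is precisely the Casimir direction $\nabla C(E_1)$, i.e.\ the zero-eigenvalue direction.

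These are exactly the hypotheses of the Moser-type theorem of \cite{BiPuTu07}: a smooth first integral vanishing together with its differential at the equilibrium, whose Hessian is definite on the two-dimensional invariant subspace associated with the purely imaginary eigenvalues $\pm i\omega$, $\omega=M^2\sqrt{M^2+1}$. Its conclusion then supplies, for every sufficiently small $\varepsilon>0$, at least one periodic orbit $\gamma_\varepsilon^{E_1}\subset\Sigma_\varepsilon^{E_1}$ whose period tends to $2\pi/\omega=2\pi/(M^2\sqrt{M^2+1})$ as $\varepsilon\to 0$, which is the claim. I expect the only delicate step to be the careful sign-bookkeeping required to match $F_1$ with the Lyapunov-type first integral demanded by \cite{BiPuTu07}, since $d^2F_1(E_1)$ is indefinite on the whole ambient space and the positive definiteness has to be extracted on the correct transverse subspace; once this is settled, the theorem applies with no further analytic difficulty.
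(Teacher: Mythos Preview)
Your proposal is correct and follows essentially the same route as the paper: both arguments invoke Theorem~2.1 of \cite{BiPuTu07}, compute the spectrum of the linearization at $E_1$ to obtain $\lambda_1=0$ and $\lambda_{2,3}=\pm iM^2\sqrt{M^2+1}$, and verify that the first integral $M^2C-H$ (up to an additive constant) has vanishing differential at $E_1$ and positive definite Hessian on the transverse subspace $W=\ker dC(E_1)=\mathrm{span}\{(0,1,0),(0,0,1)\}$. Your additional remarks about the indefiniteness on the full space and the identification of the zero-eigenvalue direction with $\nabla C(E_1)$ are accurate and only make the verification of the hypotheses of \cite{BiPuTu07} more explicit; no substantive difference from the paper's proof.
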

\begin{proof} 
We apply Theorem 2.1 from \cite{BiPuTu07}. 

The characteristic polynomial associated with the linearization of system (\ref{1}) at $E_1$ has the eigenvalues $\lambda_{1}=0$ and $\lambda_{2,3}=\pm iM^2\sqrt{M^2+1}$. Furthermore, the eigenspace corresponding to the eigenvalue zero is $\mbox{span}_{\mathbb{R} }\left\{(1,0,0)\right\}$. The constant of motion of system \eqref{1} given by 
$$I(x,y,z)=-\frac{1}{4}(x^4+y^4)+\frac{M^2}{2}(x^2+y^2+z^2)$$ has the properties: $dI(M,0,0)=0$ and $\left.d^2I\left(M,0,0\right)\right|_{W\times W}=M^2dy^2+(M^2+1)dz^2>0$, where $W=\ker dC\left(M,0,0\right)=\mbox{span}_{\mathbb{R} }\left\{(0,1,0),(0,0,1)\right\}.$

Therefore the conclusion follows via Theorem 2.1 \cite{BiPuTu07}.
\end{proof}
We obtain the same result for $E_2$. Analogously we get the next result.
\begin{proposition}\label{prop9b}
Let $E_3=\left(0,0,M\right)$ be a nonlinear stable equilibrium point of system (\ref{1}) such that $M\in\mathbb{R}^*$. Then for each sufficiently small
$\varepsilon\in\mathbb{R}_+^*$, any integral surface
$$
\Sigma_\varepsilon^{E_1}\,:\,~\frac{1}{4}(x^4+y^4)+\frac{1}{2}(x^2+y^2)=\varepsilon^2
$$
contains at least one periodic orbit $\gamma_\varepsilon^{E_3}$ of system (\ref{1}) whose period is close to $\frac{2\pi}{|M|}$.
\end{proposition}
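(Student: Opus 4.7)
The plan is to apply Theorem 2.1 of \cite{BiPuTu07} exactly as in the proof of Proposition \ref{prop9a}, so the task reduces to verifying its three hypotheses at $E_3=(0,0,M)$: the spectrum of the linearization consists of one zero eigenvalue and a pair of non-zero pure imaginary eigenvalues; a constant of motion $I$ vanishes to first order at $E_3$; and the Hessian of $I$ restricted to $\ker dC(E_3)$ is definite.

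First I would compute the linearization. Using the Jacobian $J(x,y,z)$ already written down in the stability proof, $J(0,0,M)$ reduces to a sparse matrix whose characteristic polynomial has roots $\lambda_1=0$ and $\lambda_{2,3}=\pm iM$; the zero eigenspace is $\mbox{span}_{\mathbb{R}}\{(0,0,1)\}$, which is indeed transverse to $W=\ker dC(E_3)=\mbox{span}_{\mathbb{R}}\{(1,0,0),(0,1,0)\}$.

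Next I would exhibit the right constant of motion. Since $\nabla H(E_3)=(0,0,-M)$ and $\nabla C(E_3)=(0,0,M)$, the only linear combination $\alpha H+\beta C$ with vanishing gradient at $E_3$ (up to scalar) is $I:=H+C$; the $z^2$ terms cancel, leaving
$$I(x,y,z)=\frac{1}{4}(x^4+y^4)+\frac{1}{2}(x^2+y^2),$$
which is nonnegative with $I(E_3)=0$. Hence the surface $\Sigma_\varepsilon^{E_3}$ in the statement is exactly the level set $\{I=\varepsilon^2\}$ shifted so that $E_3$ sits at its centre for $\varepsilon\to 0$. The Hessian of $I$ at $E_3$ is $\mbox{diag}(1,1,0)$, so $d^2I(E_3)|_{W\times W}=dx^2+dy^2$ is positive definite — the required definiteness condition.

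With the three hypotheses checked, Theorem 2.1 of \cite{BiPuTu07} produces, for every sufficiently small $\varepsilon>0$, at least one periodic orbit $\gamma_\varepsilon^{E_3}\subset\Sigma_\varepsilon^{E_3}$ whose period tends to $2\pi/|\mbox{Im}\,\lambda_2|=2\pi/|M|$ as $\varepsilon\to 0$. I do not expect a genuine obstacle here: the argument is template-parallel to Proposition \ref{prop9a}, and the only non-mechanical step is spotting the combination $I=H+C$, which is forced by the condition $dI(E_3)=\mathbf{0}$.
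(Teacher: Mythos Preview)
Your proof is correct and follows exactly the template the paper intends: the paper itself gives no separate argument for this proposition, writing only ``Analogously we get the next result'' after Proposition~\ref{prop9a}. Your verification of the eigenvalues $0,\pm iM$ of $J(0,0,M)$, the choice $I=H+C$ forced by $dI(E_3)=0$, and the positive definiteness of $d^2I(E_3)|_{W\times W}=dx^2+dy^2$ on $W=\ker dC(E_3)$ are all correct, and the period $2\pi/|M|$ follows.
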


\subsection{Numerical integration. Heteroclinic orbits}
Consider $(h,c)\in\Sigma_{4,5}^u$, that is $h>0$ and $c=\sqrt{2h}.$ The fiber $\mathcal{F}_{(h,c)}$ contains the unstable equilibrium points $E_4$ and $E_5$. Its implicit equation is given by 
\begin{equation}\label{HO}
\dfrac{1}{4}x^4+\dfrac{1}{4}y^4-\dfrac{1}{2}z^2=\dfrac{c^2}{2}~,~~\dfrac{1}{2}x^2+\dfrac{1}{2}y^2+\dfrac{1}{2}z^2=c.
\end{equation}
The intersection of the above level sets is shown in Figure \ref{fig:3} (c) and it suggests the existence of four pair of heteroclinic orbits that connect the unstable equilibrium points $E_4(\pm \sqrt{c},\pm \sqrt{c},0)$ and $E_5(\pm\sqrt{c},\mp\sqrt{c},0)$.

We recall that a heteroclinic orbit $\mathcal{HE}:\mathbb{R}\to\mathbb{R}^3$ is a solution $(x(t),y(t),z(t))$ of the considered system that connects two unstable equilibrium points $e_1$ and $e_2$ of the system, that is $\mathcal{HE}(t):=(x(t),y(t),z(t))$ and $\mathcal{HE}(t)\to e_1$ as $t\to -\infty$, $\mathcal{HE}(t)\to e_2$ as $t\to \infty$.

We give the numerical simulation of these heteroclinic orbits applying the mid-point rule (see \cite{Austin} and references therein) to system \eqref{1}.  

Consider the Hamilton-Poisson realization of system \eqref{1} given by Proposition \ref{p1}:
$$\dot{\bf x}=\Pi_1({\bf x})\nabla H({\bf x})~, ~~{\bf x}=(x,y,z)^t,$$
where $\Pi_1$ is the Poisson structure \eqref{pi1} and $H$ is the Hamiltonian function. The mid-point rule is given by the following implicit recursion \cite{Austin}
$$\frac{{\bf x}_{k+1}-{\bf x}_k}{\Delta t}=\Pi_1\left(\frac{{\bf x}_{k}+{\bf x}_{k+1}}{2}\right)\nabla H\left(\frac{{\bf x}_{k}+{\bf x}_{k+1}}{2}\right),$$
where $\Delta t$ is the time-step. ``If $\Pi({\bf x})$ is linear in ${\bf x}$, then the mid-point rule is an almost Poisson integrator, that is it preserve the Poisson structure up to second order'' \cite{Austin}. Furthermore, ``the mid-point rule preserves exactly any conserved quantity having only linear and quadratic terms'' \cite{Austin}.

The integrator for system \eqref{1} is given by
\begin{align}\label{I}
&\frac{x_{k+1}-x_{k}}{\Delta t}=\frac{1}{16}(y_{k}+y_{k+1})(z_{k}+z_{k+1})(4+(y_{k}+y_{k+1})^2)\nonumber\\
&\frac{y_{k+1}-y_{k}}{\Delta t}=-\frac{1}{16}(x_{k}+x_{k+1})(z_{k}+z_{k+1})(4+(x_{k}+x_{k+1})^2)\\
&\frac{z_{k+1}-z_{k}}{\Delta t}=\frac{1}{16}(x_{k}+x_{k+1})(y_{k}+y_{k+1})((x_{k}+x_{k+1})^2-(y_{k}+y_{k+1})^2)\nonumber
\end{align}
\begin{remark}{}
Because the Poisson bracket \eqref{pi1} is linear, the mid-point rule of system \eqref{1} given by \eqref{I} is an almost Poisson integrator. Moreover, the   Casimir $C$ \eqref{H-C} is quadratic, hence it is preserved by this integrator. 
\end{remark}

We implemented algorithm \eqref{I} in Wolfram Mathematica$^{TM}$. 

First, we fix $h=0.5$, $c=\sqrt{2h}=1$, and $z_1=0.5$ and compute $x_1$, $y_1$ such that $H(x_1,y_1,z_1)=h$ and $C(x_1,y_1,z_1)=c$. We find eight solutions and we take $x_1=1.25338$ and $y_1=0.42312$. Choosing the time-step $\Delta t=0.015$, after 160 iterations we get the point $(1.00305, -0.996944, 0.00128394)$ which is closer to the unstable equilibrium point $E_5(1,-1,0)$. To simulate the behavior of the orbit when $t$ decreases to $-\infty$, we consider the same initial point $(x_1,y_1,z_1)$ and $\Delta t=-0.015$. After 160 iterations we get the point $(1.00438, 0.995591, -0.00465251)$ which is closer to the unstable equilibrium point $E_4(1,1,0)$.  The discrete orbit is shown in Figure \ref{fig:4} (a). We remark that the points $(x_k,y_k,z_k)$ are very close near the both unstable equilibrium points. Analogously we obtain a second orbit that connects the points $E_4(1,1,0)$ and $E_5(1,-1,0)$ (Figure \ref{fig:4} (b)). 
\begin{figure}[h]
\centering
\subfigure[]{\label{fig:4a}\includegraphics[width=.45\linewidth]{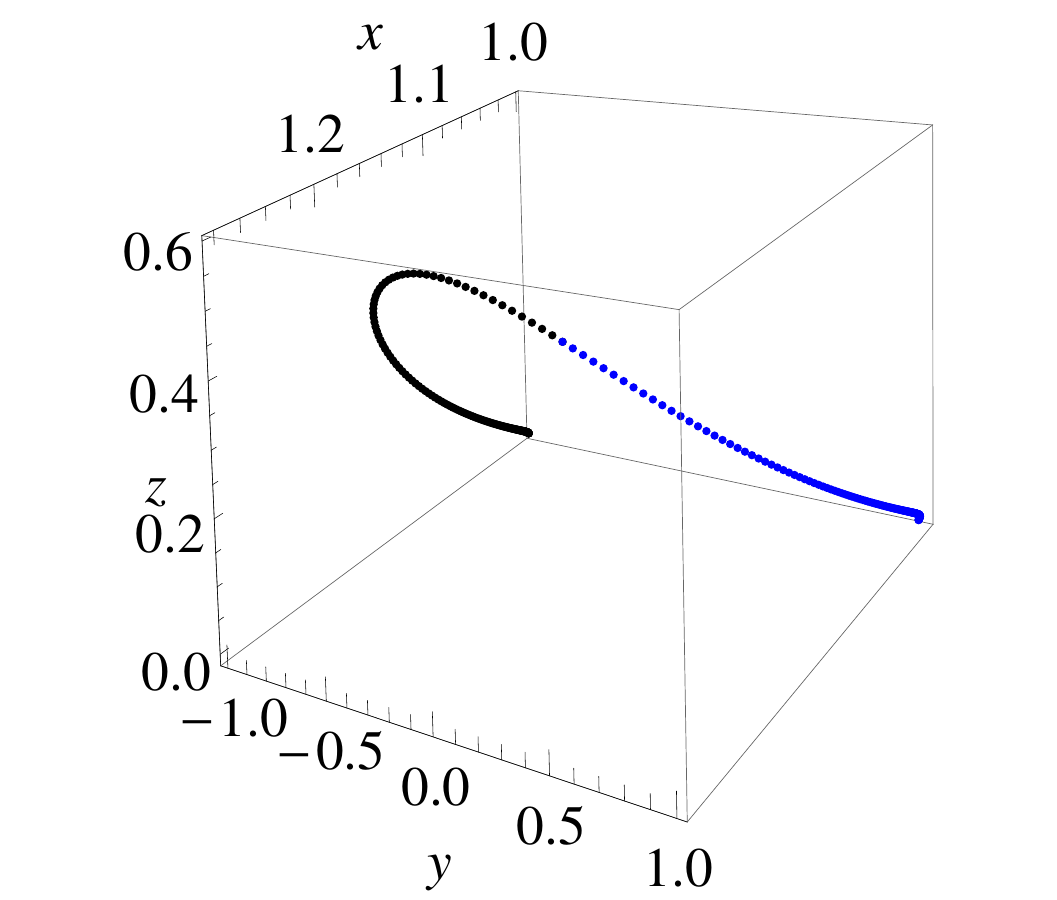}}\hspace{0.3cm}
\subfigure[]{\label{fig:4b}\includegraphics[width=.4\linewidth]{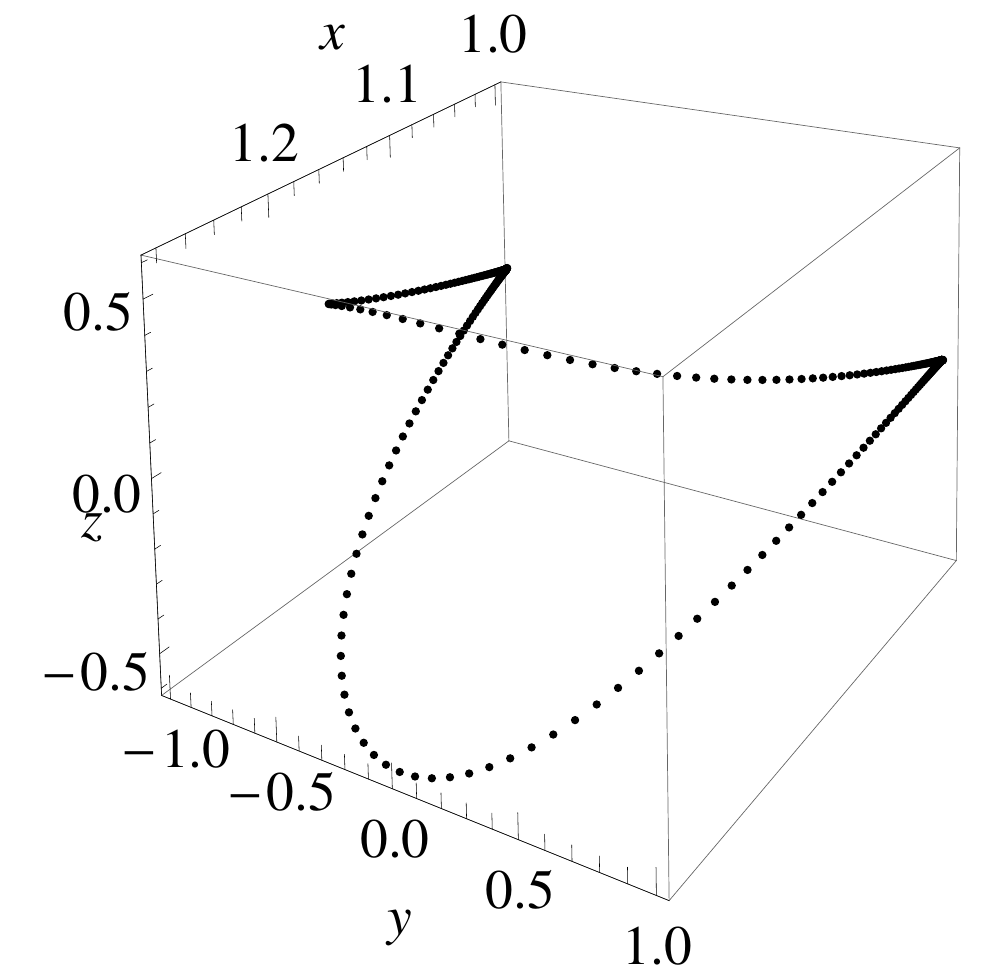}}\hspace{0.3cm}
\caption{Numerical simulation of heteroclinic orbits:  
(a) an orbit; (b) a pair of orbits.}
\label{fig:4}
\end{figure}

In Figure \ref{fig:5} the obtained pair of orbits is shown together with the intersection of the level sets $H(x,y,z)=\frac{1}{2}$ and $C(x,y,z)=1$ which correspond to the equilibrium points $E_4(1,1,0)$, $E_5(1,-1,0)$. We notice a very well superposition of these curves.
\begin{figure}[h!]
\centering
\includegraphics[width=0.5\linewidth]{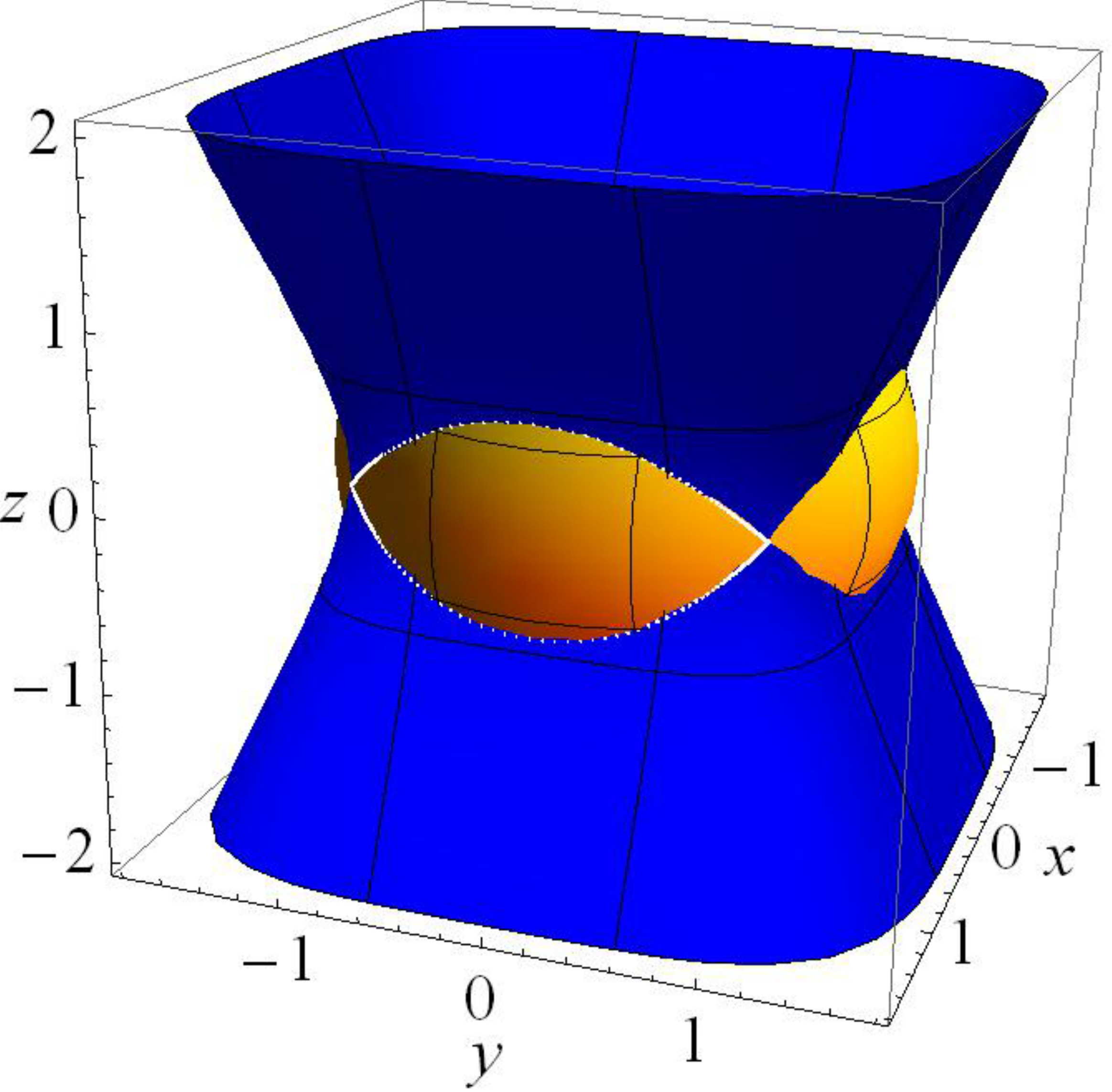}
\caption{A pair of heteroclinic orbits and the intersection of the level sets $H=c^2/2$, $C=c$.}
\label{fig:5}\end{figure}

In the same manner we obtain the others ``heteroclinic'' orbits (Figure \ref{fig:6} (a)). They arise naturally taking into account the symmetries of system \eqref{1} given by the transformations  $(x,y,z)\to (-x,-y,z)$,  $(x,y,z)\to (-x,y,-z)$,  $(x,y,z)\to (x,-y,-z)$,  $(x,y,z)\to (-y,x,z)$,  $(x,y,z)\to (y,-x,z)$. We also remark that the unstable equilibrium points $E_4(1,1,0)$, $E_5(-1,1,0)$, $E_4(-1,-1,0)$, and $E_5(1,-1,0)$ are connected by two cycles of ``heteroclinic'' orbits. Such a cycle is shown in Figure \ref{fig:6} (b).   
\begin{figure}[!h]
\centering
\subfigure[]{\label{fig:6a}\includegraphics[width=.45\linewidth]{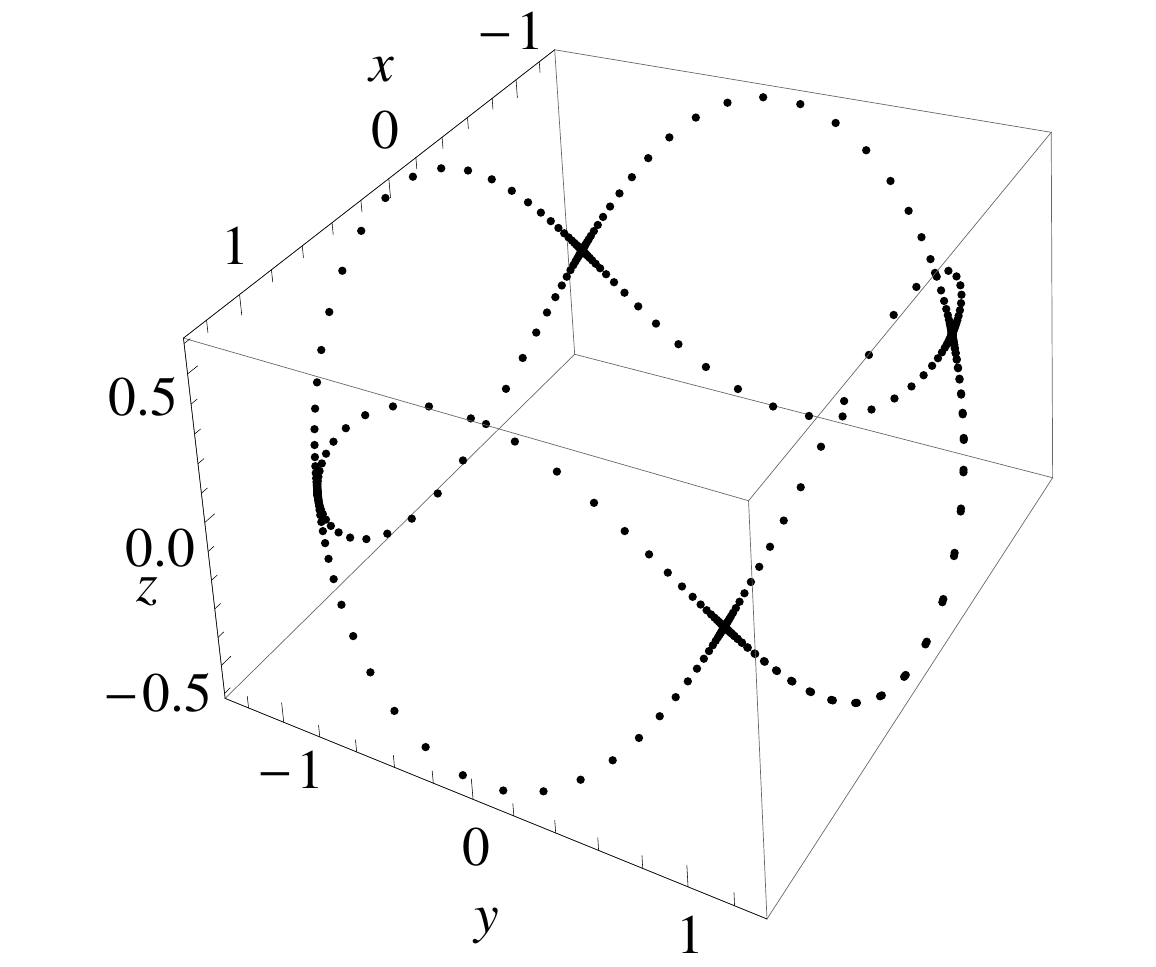}}\hspace{0.3cm}
\subfigure[]{\label{fig:6b}\includegraphics[width=.4\linewidth]{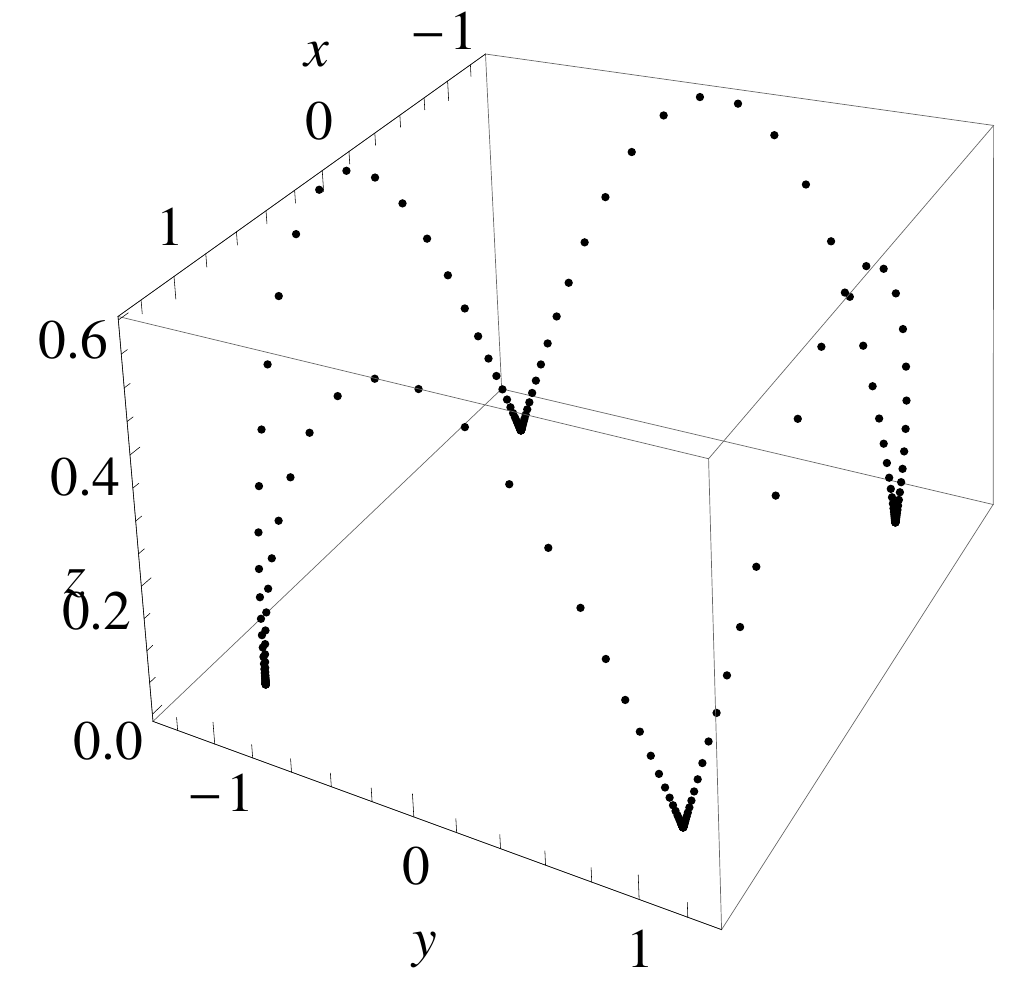}}\hspace{0.3cm}
\caption{Numerical simulation of heteroclinic orbits:  
(a) four pairs of orbits; (b) a cycle of orbits.}
\label{fig:6}
\end{figure}
\section{ACKNOWLEDGMENTS}
This work was supported by research grants PCD-TC-2017.

\smallskip
{\footnotesize
\hspace*{0.5cm}
\begin{minipage}[t]{8cm}$$\begin{array}{l}
\mbox{Cristian L\u azureanu -- Department of Mathematics,}\\
\mbox{Politehnica University of Timi\c soara},\\
 \mbox{P-ta Victoriei 2, 300 006, Timi\c soara, ROMANIA}\\
\mbox{E-mail: cristian.lazureanu@upt.ro}\end{array}$$
\end{minipage}\\
{\footnotesize
\hspace*{0.5cm}
\begin{minipage}[t]{8cm}$$\begin{array}{l}
\mbox{Camelia Petri\c sor -- Department of Mathematics,}\\
\mbox{Politehnica University of Timi\c soara},\\
 \mbox{P-ta Victoriei 2, 300 006, Timi\c soara, ROMANIA}\\
\mbox{E-mail: camelia.petrisor@upt.ro}\end{array}$$
\end{minipage}\\

\end{document}